\newtheorem{theorem}{Theorem}
\theoremstyle{remark}
\newtheorem{remark}{Remark}
\newtheorem{example}{Example}
\newcommand{\E}{\mathbf{E}}
\newcommand{\R}{\mathbb{R}}
\newcommand{\F}{\mathcal{F}}
\newcommand{\X}{X}
\newcommand{\oT}{\overline{T}}
\title{Optimal control of storage incorporating market impact\\ and
  with energy applications}
\author{James Cruise\footnote{Heriot-Watt University.  Research
    supported by EPSRC grant EP/I017054/1},
  Lisa Flatley\footnote{University of Warwick.  Research supported by
    EPSRC grant EP/K002228/1}, 
  Richard Gibbens\footnote{University of Cambridge.  Research
    supported by EPSRC grant EP/I016023/1}
  and Stan Zachary\footnotemark[1]}
\date{\today}
\begin{document}

\maketitle


\begin{abstract}
  Large scale electricity storage is set to play an increasingly
  important role in the management of future energy networks.  A major
  aspect of the economics of such projects is captured in arbitrage,
  i.e.\ buying electricity when it is cheap and selling it when it is
  expensive.  We consider a mathematical model which may account for
  nonlinear---and possibly stochastically evolving---cost functions,
  market impact, input and output rate constraints and both
  time-dependent and time-independent inefficiencies or losses in the
  storage process.  Our main concern is to develop the associated
  strong Lagrangian theory.  The Lagrange multipliers associated with
  the capacity constraints in particular have important economic
  interpretations with regard to the dimensioning of storage---both
  with respect to its capacity and its rate constraints---and prove
  key to the efficient control of a store.  We also develop an
  algorithm which determines, sequentially in time, both these
  Lagrange multipliers and the optimal control.  This algorithm
  further identifies, for each point in time, a time horizon beyond
  which it is not necessary to look in order to identify the optimal
  control at that point; this horizon is furthermore the shortest
  such.  The algorithm is thus particularly suitable for the
  management of storage over extended periods of time.  We give
  examples related to the management of real-world systems.  Finally
  we consider a pragmatic approach to the real-time management of
  storage in a stochastic cost environment, which is computationally
  feasible, optimal under certain ideal conditions, and which may in
  general be expected to perform close to optimally.  Our results are
  formulated in a general setting which permits their application to
  other energy management problems, and to other commodity storage
  problems.
\end{abstract}

\section{Introduction}
\label{sec:introduction}

How should one optimally control an energy store which is used to make
money by buying electricity when it is cheap, and selling it when it
is expensive?  While in its simplest form this is a classical
mathematical problem (see~\cite{Cahn} and, for early dynamic programming
approaches, \cite{Bell} and \cite{Drey}) we are interested in the
problem where the store has both finite capacity and rate constraints,
and where we allow that the activities of the store are of a
sufficient magnitude as to impact upon prices in the market in which
it operates.  The underlying mathematics thus required has various
novel features and needs to be carefully formulated so as to properly
account for physical characteristics of different storage technologies
and to deal with inherent nonlinearities which occur when prices are
impacted by the store's behaviour.

A closely related application is to the management of demand in such
systems, where the ability to contract with consumers to postpone
demand may be regarded as negative storage.  For some recent
discussion and work on these applications see, for example,
\cite{AVD,GJM,HCBJ, KHT,PADS,VHMS,WW} and the references therein; for
work on the optimal placement of storage within a network, see
\cite{TBH1,TBH2}.
These works are concerned, as here, with the mathematics of storage
for \emph{arbitrage}, i.e.\ taking advantage of---and hence assisting
in smoothing----price fluctuations over time.  This mathematics is of
course also quite generally applicable to the use of storage in other
markets.  (For the mathematics of other uses of storage in energy
systems---notably for buffering against uncertainty---see, for
example, \cite{BI1,BI2,BGK,GTL,HPSPB,HMN,WW}.)

We think of the available storage as a single store.  Its \emph{value}
is equal to the profit which can be made by a notional store ``owner''
buying and selling as above.  Our particular interest is in the case
where the activities of the store are sufficiently significant as to
have a market impact (the store becomes a ``price-maker'').  In this
case the store owner sees nonlinear cost functions as, at any time,
the marginal costs of buying or returns from selling vary with the
amount being bought or sold.  In the case where the \emph{system} or
\emph{societal} value of the store is required, this may be similarly
calculated by adjusting the notional buying and selling prices so that
the store ``owner'' is required to bear also the external costs of the
store's activities (see below for further discussion of this).

The nonlinearity of the cost functions means that the linear
programming techniques which might otherwise be used in the solution
of this problem are not generally available.  (However, see
Section~\ref{sec:problem} for some further discussion and references
for the case where linear programming techniques may be used.)
Neither are dynamic programming techniques---deterministic or
stochastic (see, for example, \cite{Bert76,Bert79})---always
tractable in practice.  The reason for the latter is that optimization
is typically over extended periods of time, during which the costs
involved usually vary with time in an irregular manner.  The
computational complexity of a dynamic programming approach may
therefore be unduly burdensome and is almost certainly so in a
stochastic environment.  Further, in the presence of temporal
heterogeneity dynamic programming approaches may fail to provide
necessary insights---for example, concerning the time horizons
necessary for optimal decision making, or sensitivities with respect
to local cost variations.

In the present paper we develop an approach based on the use of strong
Lagrangian techniques (convex optimization theory) which naturally
accommodates nonlinear cost functions, input and output rate
constraints, and temporal heterogeneity, and for which the associated
Lagrange multipliers provide the information necessary for the correct
dimensioning of storage with respect to both capacity and rate
constraints, and for the assessment of the economics of storage in
networks.  The strong Lagrangian approach also enables the development
of an algorithm for the solution of the problem which is efficient in
the sense that the decisions to be made at each point in time
typically depend only on a very short future horizon---which is
identifiable, but not determined in advance.  The length of this
horizon (the definition of which we make precise in
Section~\ref{sec:algorithm}) depends on the parameters of the store
and is of the same order as that of the
shortest period of time over which prices fluctuate significantly;
this is important when we may wish to optimally manage a store over a
very much longer, or perhaps indefinite, period of time.  Our approach
also allows us to account for differences in buying and selling prices
and for both time-dependent and time-independent inefficiencies in the
storage process.

Initially we work in a deterministic setting in which we assume that
all relevant buying and selling prices are known in advance.  For many
applications this is reasonable: as indicated above (and in the
realistic examples of Section~\ref{sec:example}) the time horizon
required for optimal decision making may be short.  However, elsewhere
there is a need to take account of stochastic variation, and
in Section~\ref{sec:stochastic-model} we consider prices which evolve
stochastically.  We show that in a somewhat idealised stochastic
setting---in which uncertainty evolves backwards in time as a
martingale---the optimal control is simply to replace future costs
by their expected values and to proceed as in the deterministic case.
We argue also that that this approach should continue to work well in
a more general stochastic setting when combined with the possibility
of re-optimisation at each time step.

In Section~\ref{sec:problem} we formally define the relevant
mathematical problem, while in Section~\ref{sec:solution} we use
strong Lagrangian theory to characterise mathematically its optimal
solution.  We use this theory in Section~\ref{sec:algorithm} to
develop the algorithm for the solution referred to above and to
characterise the evolving time horizon required for decision making in
a dynamic environment.  In Section~\ref{sec:sens-store-value} we show
how the value of the store changes with respect to variation in its
characteristic parameters.  Section~\ref{sec:example} considers
examples based on real data for UK electricity prices.
Section~\ref{sec:stochastic-model} studies models in which the cost
functions vary stochastically as described above, and proposes an
approach which we believe is as realistic as is practicable for
many applications.

\section{Problem formulation}
\label{sec:problem}

We work in discrete time, which we take to be integer.  We assume that
the store has total \emph{capacity} of $E$ (which, in the context of
an energy system, would be total energy which could be stored) and
input and output \emph{rate constraints} of $P_i$ and $P_o$
respectively (which, for an energy system, would be in units of
power).  We consider two types of (in)efficiency associated with the
store.  The first of these (and usually much the more significant in
practice) is a \emph{time-independent efficiency} $\eta$ which may be
defined as the fraction of energy bought which is available to sell.
This may be incorporated directly into the cost functions $C_t$, by
suitably rescaling selling and buying prices.
The second type of (in)efficiency may be regarded as \emph{leakage}
over time, and is modelled by assuming that at each successive time
instant there is lost a fraction $1-\rho$ of whatever is in the store
at that time.  We remark that it would also be possible to assume,
without loss of generality, that there was no leakage, i.e.\ that
$\rho=1$; this could be achieved by adjusting by a factor $\rho^t$ the
units of measurement of the volume in storage at each time~$t$ and
suitably redefining cost functions and constraints; however, there is
very little effort saved by introducing this additional level of
abstraction, and so we in general avoid doing so. 

Let $\X=\{x:-P_o\le x\le P_i\}$.  Both buying and selling prices at
time $t$ may conveniently be represented by a cost function $C_t$,
which we assume to be convex, and is such that $C_t(x)$
is the cost at time~$t$ of increasing the level of the store contents
(after any leakage---see below)
by $x$, positive or negative.  Typically---in a conventional
store and with positive prices---we have that each function $C_t$ is
increasing and that $C_t(0)=0$; then, for positive $x$, $C_t(x)$ is
the cost of buying $x$ units (for example of energy) and, for negative
$x$, $C_t(x)$ is the negative of the reward for selling $-x$ units;
however, for some applications (see below), the interpretation of the
functions~$C_t$ may vary slightly from this, and only the convexity
condition on these functions is required.
This convexity assumption corresponds, for each time $t$, to an
increasing cost to the store of buying each additional unit, a
decreasing revenue obtained for selling each additional unit, and
every unit buying price being at least as great as every unit selling
price.  Note that incorporating the time-independent (or
``round-trip'') efficiency~$\eta$ into the cost functions~$C_t$, as
discussed above, 
automatically preserves convexity whenever these cost functions are
increasing.  For a discussion of non-convex cost functions, see for
example~\cite{FMW}.

We are not concerned here to discuss the market derivation of the
functions $C_t$, for a discussion of which see, for
example,~\cite{CFZ}.

As indicated above, if the problem is to determine the value of the
store to the entire system in which it operates, or to society, then
these prices are taken to be those appropriate to the system or to be
societal costs.  Thus, for example, for $x$ positive, $C_t(x)$ may be the
price paid by the store at time $t$ for $x$ units of, for example,
energy plus the increased cost paid by other energy users at that time
as a result of the store's purchase increasing market prices---again
see~\cite{CFZ} for a detailed explanation of how the current model may
be used in this context.

Figure~\ref{fig:cost_fn} thus illustrates a typical cost function
$C_t$.  While the function~$C_t$ may be formally regarded as defined
over the whole real line, the rate constraints means that for the
purposes of the present problem its domain is effectively restricted
to the set $\X$ defined above.  (We shall later wish to consider the
effect of varying the rate constraints.)

\begin{figure}[!ht]
  \centering
\begin{tikzpicture}[scale=1.0]
  \draw[->,>=stealth'] (-5,0) -- (5,0) node [right] {$x$};
  \draw[->,>=stealth'] (0,-1.5) -- (0,5) node [left] {$C_t(x)$};
  \draw[very thick] (-4,-1) .. controls (-3,-0.9) .. (0,0)
  .. controls (3,1.5) .. (4,3.25);
  \draw[dashed,very thick] (-5,-1.06) -- (-4,-1);
  \draw[dashed,very thick] (4,3.25) -- (5,5);
  \draw[dashed, thick] (-4,0) node [above] {$-P_0$} -- (-4,-1);
  \draw[dashed, thick] (4,0) node [below] {$P_i$} -- (4,5);
  \draw (0.1,0) node [below left] {$0$};
  \draw (-2,-1) node [below] {sell};
  \draw (2,-1) node [below] {buy};
 \end{tikzpicture}
  \caption{Illustrative cost function $C_t$.  The domain of the
    function is effectively restricted to the set
    $\X=\{x:-P_o\le x\le P_i\}$.}
  \label{fig:cost_fn}
\end{figure}
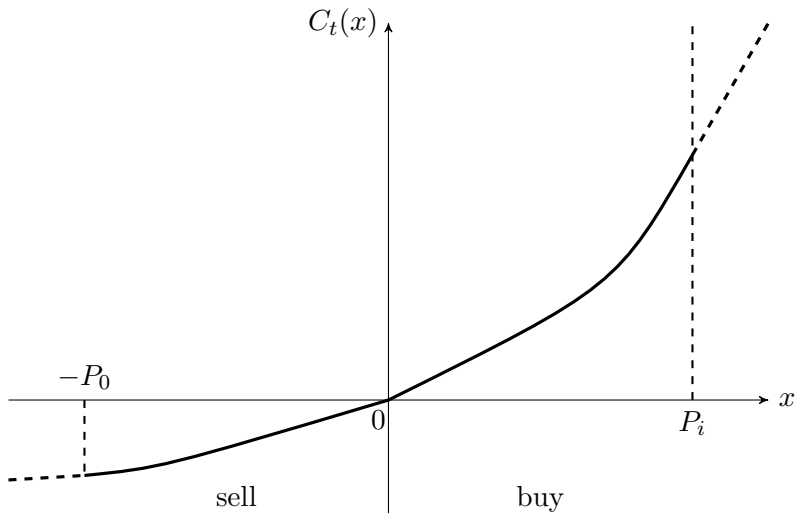
A special case is that of a ``small'' store, whose operations do not
influence the market (the store is a ``price-taker'' rather than a
``price-maker''), and which at time $t$ buys and sells at given prices
per unit of $c^{(b)}_t$ and $c^{(s)}_t$ respectively, where we assume
that $c^{(b)}_t\ge c^{(s)}_t$.  Here the function $C_t$ is given by
\begin{equation}
  \label{eq:1}
  C_t(x) =
  \begin{cases}
    c^{(b)}_t x & \quad\text{if $x\ge0$}\\
    c^{(s)}_t x & \quad\text{if $x<0$}.
  \end{cases}
\end{equation}

Finally, we assume for the moment that all prices are known in
advance, so that the problem of controlling the store is
deterministic.  We consider a realistic stochastic model in
Section~\ref{sec:stochastic-model}.


Denote the successive levels of the store by a vector
$S=(S_0,\dots,S_T)$ where $S_t$ is the level of the store at each
successive time $t$.  Define also the vector
$x(S)=(x_1(S),\dots,x_T(S))$ by $x_t(S)=S_t-\rho S_{t-1}$ for each
$t\ge1$.  Here $\rho$ is the leakage measure defined above, so that
$x_t(S)$ represents the addition to the store at time~$t$.  It is
convenient to assume that both the initial level $S_0$ and the final
level $S_T$ of the store are fixed in advance at $S_0=S^*_0$ and
$S_T=S^*_T$.  (If the final level $S_T$ is not fixed and the cost
function $C_T$ is strictly increasing, then, for an optimal control,
we may take $S_T$ to be minimised---so that finally as much as
possible of the contents of the store are sold; however, we might, for
example, wish to require $S^*_T=S^*_0$ in order to solve a problem in
which the cost functions varied cyclically.)


The problem thus becomes:

\begin{compactitem}
\item[$\mathbf{P}$:]
  (given the convex functions $C_t$) choose $S$ so as to minimise
  \begin{equation}
    \label{eq:2}
    G(S) := \sum_{t=1}^T C_t(x_t(S))
  \end{equation}
  subject to the capacity
  constraints
  \begin{gather}
    \label{eq:3}
    S_0=S^*_0, \qquad S_T=S^*_T, \qquad
    0 \le S_t\le E,
    \quad 1 \le t \le T-1.
  \end{gather}
  and the rate constraints
  \begin{equation}
    \label{eq:4}
    x_t(S) \in \X,
    \qquad 1 \le t \le T.
  \end{equation}
\end{compactitem}
We shall say that a vector $S$ is \emph{feasible} for the
problem~$\mathbf{P}$ if it satisfies both the capacity
constraints~\eqref{eq:3} and the rate constraints~\eqref{eq:4}.  We
shall assume that $S^*_0$ and $S^*_T$ are sufficiently close that it
is possible to change the level of the store from $S^*_0$ to $S^*_T$
between times $0$ and $T$, i.e.\ that the set of feasible vectors~$S$
is nonempty.  Note that this set is then closed and convex and that
the function~$G$ defined by~\eqref{eq:2} is convex, and strictly so
when the functions~$C_t$ are strictly convex.  Hence a solution to the
problem~$\mathbf{P}$ always exists, and is unique when the
functions~$C_t$ are strictly convex.

In the case where the cost functions $C_t$ are linear, or piecewise
linear, as in the ``small store'' case given by \eqref{eq:1},
the problem~$\mathbf{P}$ may be reformulated as a linear programming
problem, and solved by, for example, the use of the minimum cost
circulation algorithm (see, for example, \cite{Boyd,AMO}).  Our aim in
the present paper is to deal with the general case, to develop the
related Lagrangian theory together with an algorithm which identifies
both problem solution and associated Lagrange multipliers, and to use
this algorithm to show that the optimal choice of $S_t$ at each
time~$t$ depends only on a typically very short time horizon, thus
providing an efficient approach to the solution of the problem
(particularly the real-time management of the store within
applications) over long time periods.

Finally, we note that the mathematical problem formulated in this
section is applicable to physical problems---in energy management and
elsewhere---other than those of conventional storage.  One such is the
management of ``one-sided'' storage, such as hydroelectric power, in
which inputs are predetermined and (we assume here) known and the only
control is over the output at each successive time~$t$.  Here the
control remains the sequence $S$ of successive levels of the store,
and, for each $t$, the function~$C_t$ is such that $C_t(x_t(S))$
remains the cost of the “change” $x_t(S)$ as defined earlier.  It may
not here be natural to have $C_t(0)=0$, and we may wish to allow the
space $X$ of feasible values of $x_t(S)$ to depend on the
time~$t$---something which causes no additional complications.

A further possible application might be to the buffering of
\emph{demand}, which, as remarked earlier, may be regarded as negative
storage, $S_t$ now being the amount of demand ``postponed'' at each
successive time~$t$.  The cost functions~$C_t$ would represent the
costs of such postponement.  However, to be realistic such costs would
probably also need to reflect the durations of such postponements.

\section{Lagrangian formulation and characterisation of solution}
\label{sec:solution}

We develop the strong Lagrangian theory~\cite{Boyd,Whi} associated
with the problem~$\mathbf{P}$ defined above.  Theorem~\ref{thm:ls}
gives sufficient conditions for a value $S^*$ of $S$ to solve the
problem, while Theorem~\ref{thm:exist} guarantees the existence of
such a value of $S^*$, together with the associated vector (cumulative
Lagrange multiplier)~$\mu^*$ defined there.

\begin{theorem}
  \label{thm:ls}
  Suppose that there exists a
  vector $\mu^*=(\mu^*_1,\dots,\mu^*_T)$ and a value
  $S^*=(S^*_0,\dots,S^*_T)$ of $S$ such that
  \begin{compactenum}[(i)]
  \item $S^*$ is feasible for the stated problem,
  \item for each $t$ with $1\le t\le T$, $x_t(S^*)$ minimises
    $C_t(x)-\mu^*_tx$ in $x\in\X$,
  \item the pair $(S^*,\mu^*)$ satisfies the complementary slackness
    conditions, for $1\le t\le T-1$,
    \begin{equation}
      \label{eq:5}
      \begin{cases}
        \rho\mu^*_{t+1} = \mu^*_t & \quad\text{if $0 < S^*_t < E$,}\\
        \rho\mu^*_{t+1} \le \mu^*_t & \quad\text{if $S^*_t = 0$,}\\
        \rho\mu^*_{t+1} \ge \mu^*_t & \quad\text{if $S^*_t = E$.}
      \end{cases}
    \end{equation}
  \end{compactenum}
  Then $S^*$ solves the stated problem~$\mathbf{P}$.
\end{theorem}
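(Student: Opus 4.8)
The plan is to show that for any feasible $S$ we have $G(S) \ge G(S^*)$, by constructing the Lagrangian, using condition (ii) to bound it below, and using conditions (i) and (iii) to show the bound is attained at $S^*$. First I would introduce, for the candidate multiplier vector $\mu^*$, the Lagrangian-type expression obtained by relaxing the capacity constraints $0 \le S_t \le E$ (for $1 \le t \le T-1$) but retaining the rate constraints $x_t(S) \in \X$ and the endpoint constraints $S_0 = S^*_0$, $S_T = S^*_T$. Writing $x_t = x_t(S) = S_t - \rho S_{t-1}$, the natural object is
\begin{equation}
  L(S) := \sum_{t=1}^T \bigl( C_t(x_t(S)) - \mu^*_t x_t(S) \bigr),
\end{equation}
and the key algebraic step is an Abel summation (summation by parts): since $x_t(S) = S_t - \rho S_{t-1}$, we have
\begin{equation}
  \sum_{t=1}^T \mu^*_t x_t(S)
  = \sum_{t=1}^T \mu^*_t (S_t - \rho S_{t-1})
  = \sum_{t=1}^{T-1} (\mu^*_t - \rho\mu^*_{t+1}) S_t + \mu^*_T S_T - \rho\mu^*_1 S_0.
\end{equation}
Because $S_0 = S^*_0$ and $S_T = S^*_T$ are fixed for every feasible $S$, the boundary terms $\mu^*_T S_T - \rho\mu^*_1 S_0$ are constant over all feasible $S$; hence $G(S) = L(S) + \sum_{t=1}^{T-1}(\mu^*_t - \rho\mu^*_{t+1})S_t + \text{const}$, with the same constant for $S$ and $S^*$.

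Next I would lower-bound the two pieces separately. For $L$: since $S$ is feasible it satisfies the rate constraints, so each $x_t(S) \in \X$, and condition (ii) says $x_t(S^*)$ minimises $C_t(x) - \mu^*_t x$ over $x \in \X$; therefore $C_t(x_t(S)) - \mu^*_t x_t(S) \ge C_t(x_t(S^*)) - \mu^*_t x_t(S^*)$ term by term, giving $L(S) \ge L(S^*)$. For the linear piece $\sum_{t=1}^{T-1}(\mu^*_t - \rho\mu^*_{t+1}) S_t$: here I use feasibility of $S$ (so $0 \le S_t \le E$) together with the complementary slackness conditions~\eqref{eq:5}. The point is that at each $t$ with $1 \le t \le T-1$ the coefficient $\mu^*_t - \rho\mu^*_{t+1}$ and the value $S^*_t$ are aligned so that $(\mu^*_t - \rho\mu^*_{t+1}) S^*_t$ is the minimum of $(\mu^*_t - \rho\mu^*_{t+1}) S_t$ over $S_t \in [0,E]$: if $0 < S^*_t < E$ the coefficient is zero, if $S^*_t = 0$ the coefficient is $\ge 0$, and if $S^*_t = E$ the coefficient is $\le 0$. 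In every case $(\mu^*_t - \rho\mu^*_{t+1})(S_t - S^*_t) \ge 0$. Summing over $t$ and adding the inequality $L(S) \ge L(S^*)$ yields $G(S) \ge G(S^*)$, so $S^*$ is optimal.

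I do not expect a serious obstacle here; the argument is the standard strong-duality/complementary-slackness verification. The one place requiring care is the summation by parts and the bookkeeping of the boundary terms: one must be sure that the fixed endpoints absorb exactly the stray terms $\mu^*_T S_T$ and $\rho\mu^*_1 S_0$, so that no constraint multiplier is needed at $t = 0$ or $t = T$, and that the rate constraints (which are kept, not relaxed) are precisely what licenses the use of condition (ii) over the set $\X$. A secondary point worth stating explicitly is that feasibility of $S^*$ (condition (i)) is used both to ensure $x_t(S^*) \in \X$ (so condition (ii) is meaningful) and to ensure $S^*_t \in [0,E]$ (so the complementary slackness cases in~\eqref{eq:5} exhaust the possibilities). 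No convexity of $C_t$ is actually needed for this direction — only that the pointwise minimisation in (ii) holds — which is worth remarking.
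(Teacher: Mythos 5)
Your argument is correct and is essentially the paper's own proof: the paper performs the same summation-by-parts rearrangement (using the fixed endpoints $S_0=S^*_0$, $S_T=S^*_T$ to kill the boundary terms) to reduce $G(S)-G(S^*)$ to the term-by-term bound from condition (ii) plus the sum $\sum_{t=1}^{T-1}(\mu^*_t-\rho\mu^*_{t+1})(S_t-S^*_t)$, which is signed by condition (iii) exactly as you state. Your added remarks (that the rate constraints must be retained to invoke (ii), and that no convexity is needed for this direction) are accurate and consistent with the paper, which notes the latter point explicitly after the theorem.
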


\begin{proof}
  Let $S$ be any vector which is feasible for the problem (with
  $S_0=S^*_0$ and $S_T=S^*_T$).  Then, from
  the condition~(ii),
  \begin{displaymath}
    \sum_{t=1}^T\left[C_t(x_t(S^*)) - \mu^*_t x_t(S^*)\right]
    \le
    \sum_{t=1}^T\left[C_t(x_t(S)) - \mu^*_t x_t(S)\right].
  \end{displaymath}
  Rearranging and recalling that $S$ and $S^*$ agree at $0$ and at
  $T$, we have
  \begin{align*}
    \sum_{t=1}^T C_t(x_t(S^*)) - \sum_{t=1}^T C_t(x_t(S))
    & \le \sum_{t=1}^T \mu^*_t (S^*_t - \rho S^*_{t-1} - S_t +
    \rho S_{t-1})\\
    & = \sum_{t=1}^{T-1}(S^*_t - S_t)(\mu^*_t - \rho\mu^*_{t+1})\\
    & \le 0,
  \end{align*}
  by the condition~(iii), so that the result follows.  
\end{proof}

\begin{remark}
  Note that when the functions $C_t$ are increasing the vector~$\mu^*$
  of Theorem~\ref{thm:ls} may be taken to be \emph{nonnegative}, i.e.\
  to have nonnegative components: if $\mu^*$ does not satisfy this
  condition then its negative components may all be increased to $0$
  and the pair $(S^*,\,\mu^*)$ will continue to satisfy the conditions
  of the theorem.
\end{remark}

The vector $\mu^*$ is a cumulative form of the vector of
Lagrange multipliers associated with the capacity constraints
\eqref{eq:3} (see the proof of Theorem~\ref{thm:exist} below).  It has
the interpretation that, for each $t$, the quantity $\mu^*_t$ may be
regarded as a notional reference value per unit volume in storage
\emph{at that time}.  Thus, in the condition~(ii) of the theorem,
$C_t(x)$ is the cost at time~$t$ of increasing the level of the store
by $x$ (again positive or negative) and $\mu^*_tx$ may be regarded as
a current offsetting measure of value added to the store; the quantity
$C_t(x)-\mu^*_tx$ is thus to be minimised in $x\in\X$.  The
relations~\eqref{eq:5} of condition~(iii) of the theorem are then such
that, were they to be violated, $x_t$ and $x_{t+1}$ could in general
be adjusted so as to leave unchanged the level of the store at the end
of time~$t+1$ while reducing the overall cost of operating the store
throughout the period consisting of the times $t$ and $t+1$.

Note also that, in the condition~(ii) of Theorem~\ref{thm:ls}, the
minimisation takes place without reference to the \emph{capacity}
constraints (as is appropriate given the above Lagrangian
interpretation of $\mu^*$).  However, the minimisation of that
condition is required to respect the \emph{rate} constraints
$x\in\X$---for which no Lagrange multiplier is introduced at this
stage (but see Section~\ref{sec:sens-store-value}).  The reason for
the apparent asymmetry of treatment of the two constraint types is
that it is only the capacity constraints which introduce complexity
into the optimisation problem, by introducing interactions between the
amounts which may be bought and sold at different times.  The rate
constraints could, if we wished, be dropped from the formal statement
of the problem by suitably modifying the cost functions so that the
violation of these constraints was simply prohibitively expensive.

Before considering Theorem~\ref{thm:exist}, which guarantees the
existence of the pair $(S^*,\mu^*)$, we give a couple of simple
examples, in each of which the reference vector $\mu^*$ is identified.
Theorem~\ref{thm:ls} is not, however, needed for the solution of the
first, very simple, example.  It is needed in the second example only
in the case where the store is sufficiently large as to have market
impact (i.e.\ be a price-maker).

\begin{example}
  As a simple (toy) example, suppose that $T=2$ and that the cost
  functions $C_t$, $t=1,2$, in addition to being increasing and
  convex, are differentiable (with necessarily continuous first
  derivatives); however, as an exception and in order to allow for a
  distinction between buying and selling prices we allow a difference
  between the left and right derivatives of the functions~$C_t$ at
  $0$, denoting these one-sided derivatives by $C'_t(0-)$ and
  $C'_t(0+)$ respectively (with, necessarily, $C'_t(0-)\le C'_t(0+)$
  for $t=1,2$).  We suppose additionally, and again for simplicity,
  that the input and output rate constraints are equal, setting
  $P_i=P_o=P$, and that there is no leakage (i.e.\ $\rho=1$).
  Finally we suppose $S^*_0=S^*_2=0$ so that the store starts empty
  and is required to finish empty.  Thus the only possible control of
  the store lies in the choice of the amount~$x\ge0$ which is bought
  at time~$1$ and sold again at time~$2$.

  For this example, the optimal policy is of course easily
  determined. Our concern is merely to identify, in this very simple
  case, the vector $\mu^*$ of Theorem~\ref{thm:ls}.  This vector plays
  a crucial r\^ole in more complex optimization over longer time
  periods.  We consider the three possible cases.
  \begin{compactenum}[(i)]
  \item If $C'_1(0+)\ge C'_2(0-)$ then clearly the optimal policy is
    buy and sell nothing and we take $x=0$.  For the vector~$\mu^*$ of
    Theorem~\ref{thm:ls} we may take $\mu^*_1=C_1'(0+)$ and
    $\mu^*_2=C_2'(0-)$.
  \item If $C'_1(0+)<C'_2(0-)$ and there exists $x$ such that both
    $0\le x\le\min(E,\,P)$ and $C'_1(x)=C'_2(-x)$, then this choice of
    $x$ is again clearly optimal.  The vector~$\mu^*$ is given
    (uniquely) by $\mu^*_1=\mu^*_2=C'_1(x)$.
  \item Finally, if $C'_1(x)<C'_2(-x)$ for all $x$ such that $0\le
    x\le\min(E,\,P)$, then the optimal choice of $x$ is given by
    $x=\min(E,\,P)$.  In the case where $P\le E$ we require
    $C'_1(P)\le\mu^*_1=\mu^*_2\le C'_2(-P)$, while in the case where
    $E<P$ we require $C'_1(E)=\mu^*_1\le\mu^*_2=C'_2(-E)$.
  \end{compactenum}
  Note that the actual solution to this very simple problem depends on
  $E$ and $P$ only through $\min(E,\,P)$.  However, as previously
  observed, $\mu^*$ plays an asymmetric r\^ole with respect to
  capacity and rate constraints and thus formally differs in the
  case~(iii) according to which of $E$ or $P$ is the greater.
\end{example}

\begin{example}
  \emph{Periodic costs.}  As a second simple example, we suppose that
  the cost functions vary over time in a manner which is completely
  periodic.  To begin with, we consider the ``small store'', or
  price-taker, case in which the cost functions~$C_t$ are given by
  \eqref{eq:1} (with $c^{(b)}_t\ge c^{(s)}_t$ for all $t$).  We
  suppose that the periodic behaviour is such that, at some time $t_1$
  in a cycle, both $c^{(b)}_{t_1}$ and $c^{(s)}_{t_1}$ are
  simultaneously at a minimum; the unit costs $c^{(b)}_{t}$ and
  $c^{(s)}_{t}$ then increase monotonically up to a time $t_2>t_1$
  where they are simultaneously at a maximum, before decreasing
  monotonically again to the same minimum value as previously at
  further time $t_3>t_2$; this pattern is then repeated indefinitely
  with period $t_3-t_1$.  We suppose also that the minimum value of the
  unit buy costs $c^{(b)}_{t}$ is less than the maximum value of the
  unit sell costs $c^{(s)}_{t}$ (otherwise the store remains unused).
  We again assume, for simplicity, that there is no leakage (i.e.\
  $\rho=1$), that $P_i=P_o=P$ and that time is sufficiently
  finely discretised that $E/P$ (the minimum time in which the store
  may completely empty or fill) may be taken to be integer.  The
  optimal control policy depends (up to a multiplicative constant) on
  $E$ and $P$ only through the ratio $E/P$; hence, without loss of
  generality, we assume $P=1$.  

  The simplicity of this example is such that the optimal control of
  the store is again immediately clear: for all $E$ there exist
  reference costs $\mu^{(b)}\le\mu^{(s)}$ such that the store buys the
  maximum value of one unit at those times such that
  $c^{(b)}_t<\mu^{(b)}$ and sells the maximum value of one unit at
  those times such that $c^{(s)}_t>\mu^{(s)}$; for $E$ sufficiently
  small we may take $\mu^{(b)}<\mu^{(s)}$ and the store completely
  empties and fills on each cycle; however, as $E$ increases it
  reaches a value at which the reference costs $\mu^{(b)}$ and
  $\mu^{(s)}$ equalise, and for this and larger values of $E$ the
  capacity constraint is no longer binding.

  As in the case of the previous example, this ``small store'' problem
  is too simple for its solution to require the use of the reference
  vector $\mu^*$ of Theorem~\ref{thm:ls} (but see below for where it
  is needed).  We note, however, that this vector may be given by
  $\mu^*_t=\mu^{(b)}$ at those times~$t$ at which the store is buying,
  and by $\mu^*_t=\mu^{(s)}$ at those times~$t$ at which it is
  selling; at other times (at each of which the store will either be
  completely full or completely empty) $\mu^*_t$ is merely required to
  satisfy the condition~(iii) of Theorem~\ref{thm:ls} together with
  the condition $c^{(s)}_t\le\mu^*_t\le c^{(b)}_t$ (so that the
  condition~(ii) of Theorem~\ref{thm:ls} is satisfied).

  We also comment briefly on the effect of varying the frequency of
  the cost variation.  If, in what should strictly be a
  continuous-time setting, this frequency is increased by a factor
  $\alpha$ with the rate constraint~$P$ being similarly increased by
  the same factor, then this corresponds to a simple time speed-up,
  with the store's revenue per unit time also being increased by the
  factor~$\alpha$.  However, suppose instead that while the frequency
  of the cost variation is increased by the factor~$\alpha$, the rate
  constraint $P$ is held constant at its original value and that the
  capacity constraint $E$ is replaced by $E/\alpha$.  It then
  follows, from the earlier observation that the optimal control
  depends on $E$ and $P$ only through their ratio, that the optimal
  control is here a rescaled version of the original and that the
  store's revenue per unit time remains unchanged from the original.
  Thus we have the well-known result that more frequent cost variation
  enables the same revenue to be obtained with a smaller store
  capacity.

  When we consider the general case in which the store is a
  price-maker, and in which the cost functions~$C_t$ have the same
  general periodicity over time, but no longer have the simple
  structure given by~\eqref{eq:1}, then the store may fill and empty
  over periods of time which are longer than the minimum necessary, so
  as to avoid the higher costs or penalties of buying or selling too
  much at once.  The reference vector $\mu^*$ of Theorem~\ref{thm:ls}
  then becomes essential in deciding the correct volume of each
  transaction.
\end{example}

\medskip

Theorem~\ref{thm:ls} does not require the convexity of the cost
functions $C_t$ of the problem~$\mathbf{P}$ defined in
Section~\ref{sec:problem}.  This condition is, however, required to
ensure the existence of the vector $\mu^*$ of that theorem, as is
given by Theorem~\ref{thm:exist} below.  The latter theorem identifies
$\mu^*$ as essentially a cumulative Lagrange multiplier for capacity
constraint variation.  It is a further application of arguments to be
found in strong Lagrangian theory (again see~\cite{Whi}).

We have already observed that, under \emph{strict} convexity of the
cost functions~$C_t$, the solution~$S^*$ to the problem~$\mathbf{P}$
is unique.  However, we further remark that even this condition is
insufficient to guarantee uniqueness of $\mu^*$ as above.  We address
this issue in Section~\ref{sec:sens-store-value}, where we assume
sufficient differentiability conditions on the cost functions~$C_t$ as
to ensure uniqueness of $\mu^*$ and to derive sensitivity results for
variation of the minimised cost function of $\mathbf{P}$ with respect
to both its capacity and rate constraints.

Prior to Theorem~\ref{thm:exist} it is convenient to introduce the
more general problem~$\mathbf{P}(a,\,b)$ in which $S_0$ is kept fixed
at the value $S^*_0$ of interest above, but in which $S_1,\dots,S_T$
are allowed to vary between quite general upper and lower bounds:
\begin{compactitem}
\item[$\mathbf{P}(a,\,b)$:] 
  minimise $\sum_{t=1}^TC_t(x_t(S))$ over all $S=(S_0,\dots,S_T)$
  with $S_0=S^*_0$ and subject to the further constraints
  \begin{equation}
    \label{eq:6}
    a_t \le S_t \le b_t,  \qquad 1 \le t \le T,
  \end{equation}
  and $x_t(S)\in\X$ for $1 \le t \le T$, where $a=(a_1,\dots,a_T)$ and
  $b=(b_1,\dots,b_T)$ are such that $a_t\le b_t$ for all $t$.
\end{compactitem}
Note that the convexity of the functions $C_t$ guarantees their
continuity, and, since for each $a$, $b$ as above the space of allowed
values of $S$ is compact, a solution $S^*(a,\,b)$ to the
problem~$\mathbf{P}(a,\,b)$ always exists.  Let $V(a,\,b)$ be the
corresponding minimised value of the objective function, i.e.\
$V(a,\,b)=\sum_{t=1}^TC_t(x_t(S^*(a,\,b)))$.  Then $V(a,\,b)$ is
itself convex in $a$ and $b$.  (To see this, consider, for example,
any convex combination
$(\bar a,\bar b)=(\lambda a_1+(1-\lambda)a_2,\lambda b_1+(1-\lambda)b_2)$
of any two values $(a_1,b_1)$ and $(a_2,b_2)$ of the pair $(a,b)$, where
$0\le\lambda\le1$; the linearity of the constraints~\eqref{eq:3} and
\eqref{eq:4} implies that the vector
$\bar S=\lambda S^*(a_1,b_1)+(1-\lambda)S^*(a_2,b_2)$ is feasible for
the problem~$\mathbf{P}(\bar a,\,\bar b)$; hence
\begin{align*}
  V(\bar a,\bar b)
  & \le \sum_{t=1}^TC_t(x_t(\bar S))\\
  & = \sum_{t=1}^TC_t(\lambda x_t(S^*(a_1,b_1)) + (1-\lambda)x_t(S^*(a_2,b_2)))\\
  & \le \lambda\sum_{t=1}^TC_t(x_t(S^*(a_1,b_1)))
  + (1-\lambda)\sum_{t=1}^TC_t(x_t(S^*(a_2,b_2)))\\
  & = \lambda V(a_1,b_1) + (1-\lambda)V(a_2,b_2),  
\end{align*}
where the second inequality above follows from the convexity of the
functions $C_t$.)
Define also $a^*$ and $b^*$ to be the values of $a$ and $b$
corresponding to our particular problem~$\mathbf{P}$ of interest,
i.e.\ $a^*_t=0$ and $b^*_t=E$ for $1\le t\le T-1$, and
$a^*_T=b^*_T=S^*_T$.  Further, let
$S^*=(S^*_0,\dots,S^*_T)=S^*(a^*,\,b^*)$ denote the solution to this
problem.

\begin{theorem}
  \label{thm:exist}
  Under the given convexity condition on the cost functions $C_t$,
  there always exists a pair $(S^*,\mu^*)$ which solves the
  problem~$\mathbf{P}$ as in Theorem~\ref{thm:ls}.
\end{theorem}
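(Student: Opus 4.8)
The plan is to obtain $\mu^*$ from the Lagrange multipliers for the capacity constraints, via strong Lagrangian duality, exploiting the convexity of $V$ just established. Since we wish to leave the rate constraints $x_t(S)\in\X$ and the fixed value $S_0=S^*_0$ undualized (consistent with the asymmetric rôle of $\mu^*$ discussed above), I would form the \emph{partial} Lagrangian obtained by adjoining to the objective $\sum_t C_t(x_t(S))$ only the constraints $-S_t\le 0$ and $S_t-E\le 0$ ($1\le t\le T-1$) and the equality $S_T=S^*_T$, with multipliers $\alpha_t\ge 0$, $\beta_t\ge 0$ and $\gamma\in\R$ respectively; this is to be minimised over the compact convex set $\mathcal{S}=\{S:S_0=S^*_0,\ x_t(S)\in\X,\ 1\le t\le T\}$. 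The perturbation function associated with relaxing these dualized constraints is a restriction and reparametrisation of the function $V$ shown above to be convex. Standard strong Lagrangian theory (see~\cite{Whi}) — and this is precisely where the convexity hypothesis on the $C_t$ enters — then yields multipliers $(\alpha^*,\beta^*,\gamma^*)$ with $\alpha^*,\beta^*\ge 0$ such that $S^*=S^*(a^*,b^*)$ minimises the partial Lagrangian over $\mathcal{S}$, and such that the complementary slackness relations $\alpha^*_tS^*_t=0$ and $\beta^*_t(S^*_t-E)=0$ hold for $1\le t\le T-1$.

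The remaining work is to repackage these multipliers into the cumulative form $\mu^*$. I would use the bijection $(S_0,x_1,\dots,x_T)\mapsto S$ given by $S_t=\rho^tS_0+\sum_{s=1}^t\rho^{t-s}x_s$, under which membership of $\mathcal{S}$ is simply the requirement $x_t\in\X$ for each $t$ \emph{independently}. Substituting into the partial Lagrangian, all dependence on $S$ is through the $x_t$; collecting the coefficient of each $x_t$ shows that the Lagrangian equals a constant plus $\sum_{t=1}^T\bigl(C_t(x_t)-\mu^*_tx_t\bigr)$, where
\begin{equation*}
  \mu^*_t = -\gamma^*\rho^{T-t}+\sum_{s=t}^{T-1}(\alpha^*_s-\beta^*_s)\rho^{s-t}\quad(1\le t\le T-1),\qquad \mu^*_T=-\gamma^*.
\end{equation*}
Since this sum is separable over $t$, the minimisation over $\mathcal{S}$ decouples, so $x_t(S^*)$ minimises $C_t(x)-\mu^*_tx$ over $x\in\X$ for each $t$ — this is condition~(ii) of Theorem~\ref{thm:ls}. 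From the displayed formula one checks directly that $\rho\mu^*_{t+1}-\mu^*_t=\beta^*_t-\alpha^*_t$ for $1\le t\le T-1$; feeding in complementary slackness — which forces $\alpha^*_t=\beta^*_t=0$ when $0<S^*_t<E$, $\beta^*_t=0$ when $S^*_t=0$, and $\alpha^*_t=0$ when $S^*_t=E$ — reproduces exactly the three cases of~\eqref{eq:5}, i.e.\ condition~(iii). Condition~(i) is immediate since $S^*$ solves $\mathbf{P}$, and Theorem~\ref{thm:ls} then gives the conclusion.

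I expect the one genuinely delicate point to be the invocation of strong duality in the first paragraph, i.e.\ the existence of the multipliers $(\alpha^*,\beta^*,\gamma^*)$ — equivalently, that the convex perturbation function admits a subgradient at the origin even though it may take the value $+\infty$ in some directions (the feasibility hypothesis on $S^*_0,S^*_T$ is not assumed to be robust). The resolution is that relaxing the affine capacity constraints can lower the optimal cost by at most a linear amount, because the $C_t$ have bounded one-sided slopes over the compact effective domain $\X$; hence the relevant one-sided directional derivatives of the perturbation function at the origin are finite, which together with its convexity supplies the subgradient. Everything subsequent — the change of variables, the arithmetic producing the formula for $\mu^*$, and the verification of (i)–(iii) — is routine bookkeeping.
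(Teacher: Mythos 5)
Your proposal is correct and follows essentially the same route as the paper: dualize only the capacity (and terminal) constraints, obtain the multipliers from convexity of the perturbed value function $V(a,b)$ via the supporting hyperplane theorem, then eliminate the $S_t$ in favour of the $x_t$ so the partial Lagrangian separates, which yields the cumulative multiplier $\mu^*_t$ (your formula agrees with the paper's \eqref{eq:15} up to sign conventions and the folding of the terminal equality into $a_T=b_T$) and conditions (ii)--(iii) by complementary slackness. Your closing remark on why the subgradient of the perturbation function exists despite $V$ possibly being $+\infty$ in some directions is a point the paper's proof passes over silently, and your resolution of it is sound.
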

\begin{proof}
  Consider the more general problem~$\mathbf{P}(a,\,b)$ defined above.
  Introduce slack (or surplus) variables $z=(z_1,\dots,z_t)$ and
  $w=(w_1,\dots,w_t)$ and rewrite this problem as:
  \begin{compactitem}
  \item[$\mathbf{P}(a,\,b)$:] 
    minimise $\sum_{t=1}^TC_t(x_t(S))$ over all $S=(S_0,\dots,S_T)$
    with $S_0=S^*_0$, all $z\ge0$, all $w\ge0$, and subject to the
    further constraints
    \begin{align}
      S_t - z_t & = a_t, \qquad 1 \le t \le T, \label{eq:7}\\
      S_t + w_t & = b_t, \qquad 1 \le t \le T, \label{eq:8}
    \end{align}
    and, again, $x_t(S)\in\X$ for $1 \le t \le T$. 
  \end{compactitem}

  Since, as already observed, the function $V(a,\,b)$ is itself convex
  in $a$ and $b$, it follows by the supporting hyperplane theorem (see
  \cite{Boyd} or \cite{Whi}), that there exist vectors (Lagrange
  multipliers) $\alpha^*=(\alpha^*_1,\dots,\alpha^*_T)$ and
  $\beta^*=(\beta^*_1,\dots,\beta^*_T)$ such that
  \begin{equation}
    \label{eq:9}
    V(a,\,b) \ge V(a^*,\,b^*) + \sum_{t=1}^T\alpha^*_t(a_t-a^*_t)  +
    \sum_{t=1}^T\beta^*_t(b_t-b^*_t)
    \qquad\text{for all $a$, $b$}.
  \end{equation}
  Thus also, for all $S$ with $S_0=S^*_0$ and such that $x_t(S)\in\X$
  for $1 \le t \le T$, for all $z\ge0$, and for all $w\ge0$,
  \begin{multline}
    \label{eq:10}
    \sum_{t=1}^T \left[C_t(x_t(S)) - \alpha^*_t(S_t-z_t)
      -\beta^*_t(S_t+w_t)\right]\\
    \ge
    \sum_{t=1}^T \left[C_t(x_t(S^*)) - \alpha^*_t(S^*_t-z^*_t)
      -\beta^*_t(S^*_t+w^*_t)\right]
  \end{multline}
  Since the components of $z$ and $w$ may take arbitrary positive
  values, we deduce immediately the following usual complementary
  slackness conditions for the vectors of Lagrange multipliers
  $\alpha^*$ and $\beta^*$:
  \begin{align}
    \alpha^*_t \ge 0, \qquad &
    \text{$\alpha^*_t=0$  whenever $z^*_t>0$},
    \qquad 1\le t \le T,
    \label{eq:11}\\
    \beta^*_t \le 0, \qquad &
    \text{$\beta^*_t=0$  whenever $w^*_t>0$},
    \qquad 1\le t \le T.
    \label{eq:12}
  \end{align}
  Thus, from \eqref{eq:10}--\eqref{eq:12} and by taking $z_t=w_t=0$ for
  all $t$ on the left side of~\eqref{eq:10}, it follows that, for all
  $S$ with $S_0=S^*_0$ and $x_t(S)\in\X$ for $1 \le t \le T$,
  \begin{equation}
    \label{eq:13}
    \sum_{t=1}^T \left[C_t(x_t(S)) - (\alpha^*_t+\beta^*_t)S_t \right]
    \ge
    \sum_{t=1}^T \left[C_t(x_t(S^*)) - (\alpha^*_t+\beta^*_t)S^*_t \right].
  \end{equation}
  Thus also, for all $x=(x_1,\dots,x_t)$ such that $x_t\in\X$ for $1
  \le t \le T$, by defining $S$ by $S_0=S^*_0$ and
  $S_t=\rho S_{t-1}+x_t$ for $1\le t\le T$, it follows that
  \begin{equation}
    \label{eq:14}
    \sum_{t=1}^T \left[C_t(x_t) - \mu^*_t x_t \right]
    \ge
    \sum_{t=1}^T \left[C_t(x_t(S^*)) -  \mu^*_t x_t(S^*) \right].
  \end{equation}
  where, for each $1\le t\le T$, we define
  \begin{equation}
    \label{eq:15}
    \mu^*_t=\sum_{u=t}^T\rho^{u-t}(\alpha^*_u+\beta^*_u).
  \end{equation}
  It now follows that the pair $(S^*,\mu^*)$ satisfies the
  conditions~(i) and (ii) of Theorem~\ref{thm:ls}.  Further, on
  recalling from \eqref{eq:7} and \eqref{eq:8} respectively that, for
  $1\le t\le T-1$, we have $z^*_t=0$ if and only if $S^*_t=0$ and
  $w^*_t=0$ if and only if $S^*_t=E$, it follows also from
  \eqref{eq:11}, \eqref{eq:12} and the definition~\eqref{eq:15} of the
  vector $\mu^*$, that the pair $(S^*,\mu^*)$ satisfies the
  complementary slackness conditions (iii) of Theorem~\ref{thm:ls}.
\end{proof}

Recall the earlier interpretation of each successive $\mu^*_t$ as
providing a unit reference value determining the quantity $x_t$
(positive or negative) which should be added to the level of the store
at that time.  In Section~\ref{sec:algorithm} we give an efficient
algorithm for the determination of the successive values of $\mu^*_t$.

\section{Determination of optimal control and associated Lagrange
  multipliers}
\label{sec:algorithm}

We now give an explicit construction of a pair $(S^*,\mu^*)$
as in Theorem~\ref{thm:ls}.  This construction further provides an
algorithm for the solution of the problem~$\mathbf{P}$ in the general
case.  The algorithm proceeds sequentially in time, and has the
``locality'' property that, at each time~$t$, the identification of
the optimal value $x^*_t$ of $x_t$ requires a knowledge of the cost
functions $C_{t'}$ only up to a time horizon which, while necessarily
greater than $t$, is frequently very much less than $T$.  Thus, for
example, if the cost functions vary strongly on an essentially daily
cycle, while the period over which the optimal control is required is
of the order of months or years, nevertheless the optimal decision at
each point in time typically depends only on a knowledge of the cost
functions for a future period of the order of a day or so---see the
further discussion at the end of this section and the examples of
Section~\ref{sec:example}.  The algorithm is thus
in general suitable for the optimal control of the store on an
essentially infinite time horizon.  We make these ideas clear below.

We assume for the moment that there is no leakage from the store over
time, i.e.\ that $\rho=1$.  With this assumption, the algorithm below
may briefly be described as that of attempting to choose $(S^*,\mu^*)$
so as to satisfy the conditions of Theorem~\ref{thm:ls}, by choosing
the components of these vectors successively in time and by keeping
$\mu^*_t$ as constant as possible over $t$, changes only being allowed
at those times when the store is either empty or full.  Once the
algorithm is understood, the modifications required to deal with the
more general case $\rho\le1$ are easily seen and are indicated in
brief at the end of this section.

For further simplicity, we suppose first that the cost functions $C_t$
are all strictly convex.  Then, as already noted, the vector~$S^*$ of
Theorem~\ref{thm:ls} is unique---though the corresponding
vector~$\mu^*$ need not be.  We give a construction of $(S^*,\mu^*)$
which is sequential in time.  For any $t$ such that $1\le t\le T$ and
any (scalar) $\mu$, define $x^*_t(\mu)$ to be the unique value of
$x$ which minimises $C_t(x)-\mu x$ in $x\in\X$.  Note that
$x^*_t(\mu)$ is then continuous and increasing (though not necessarily
strictly so) in $\mu$.  We show how to identify inductively a sequence
of times $0=T_0<T_1<\dots<T_K=T$ and a corresponding sequence
$(\bar\mu_1,\dots,\bar\mu_K)$, such that, for each $k=1,\dots,K$, we
may take $\mu^*_t=\bar\mu_k$ for $T_{k-1}+1\le t\le T_k$.  The vector
$S^*$ is then constructed as in (ii) of Theorem~\ref{thm:ls} and the
pair $(S^*,\mu^*)$ satisfies all the conditions of that theorem.

Further, for each $k=1,\dots,K-1$, we identify a time $\oT_k>T_k$ such
that, for any $t$,
\begin{compactenum}[1.]
\item whether or not $\oT_k$ is equal to $t$ is does not depend on the
  cost functions subsequent to time $t$;
\item whenever $\oT_k$ is equal to $t$, both the values of $T_k$ and
  of $(S^*_t,\mu^*_t)$ for $1\le t\le T_k$ do not depend on the cost
  functions subsequent to the time $t$; thus for each $t$ such that
  $T_{k-1}+1\le t\le T_k$, the time~$\oT_k$ represents the time horizon
  identified earlier as that beyond which it is not necessary to look
  for the determination of the optimal decision at time~$t$.
\end{compactenum}
Thus, were the cost functions stochastic, we should describe each
$\oT_k$ as a stopping time (though of course the nature of the optimal
control in a stochastic environment might well be different---see
Section~\ref{sec:stochastic-model}).  


In stating the construction it will be sufficient to consider the
identification of the time~$T_1$ and the constant~$\bar\mu_1$,
together with the further time~$\oT_1$.  Since the optimal control is
then identified up to the time $T_1$, the construction may then be
restarted at that time.  Theorem~\ref{thm:const} below then shows that
the pair $(S^*,\mu^*)$ thus constructed over the entire time period
$[1,\dots,T]$ has all the required properties necessary to define the
optimal control.

We thus consider trial values $\mu$ of $\bar\mu_1$.  
For each (scalar) $\mu$, define a vector
$S(\mu)=(S_0(\mu),\dots,S_T(\mu))$ by $S_0(\mu)=S^*_0$ and
\begin{equation}\label{eq:16}
    S_t(\mu) = S_{t-1}(\mu)+x^*_t(\mu), \qquad 1 \le t \le T.
\end{equation}
For each such $\mu$ define $\oT(\mu)$ be first time~$t$, $1\le t\le
T$, such that $S_t(\mu)$ violates one of the capacity
constraints~\eqref{eq:3}; if there is no such time (i.e.\ the path
$S(\mu)$ satisfies all the capacity constraints and so is feasible for
the problem~$\mathbf{P}$) we write $\oT(\mu)=\infty$.  Define $M_1$ to
be the set of $\mu$ such that $\oT(\mu)\le T$ and such that it is the lower
capacity constraint which is violated at the time $\oT(\mu)$ (i.e.\
$S_{\oT(\mu)}(\mu)<0$ if $\oT(\mu)<T$, and $S_{\oT(\mu)}(\mu)<S_T^*$ if
$\oT(\mu)=T$).  Similarly define $M'_1$ to be the set of $\mu$ such that
$\oT(\mu)\le T$ and such that it is the upper capacity constraint which is
violated at the time $\oT(\mu)$ (i.e.\ $S_{\oT(\mu)}(\mu)>E$ if
$\oT(\mu)<T$, and $S_{\oT(\mu)}(\mu)>S_T^*$ if $\oT(\mu)=T$).


Since each $x^*_t(\mu)$ is increasing in $\mu$, it follows that if
$\mu\in M_1$ then $\mu'\in M_1$ for all $\mu'<\mu$ and that if $\mu\in
M_1'$ then $\mu'\in M_1'$ for all $\mu'>\mu$; further the sets $M_1$
and $M_1'$ are disjoint, and (since the pair $(S^*,\mu^*)$ exists)
neither $M_1$ nor $M_1'$ can be the entire real line.  We now set
$\bar\mu_1=\sup M_1$.  (In the case where $M_1$ is empty---which could
only happen when the sole feasible strategy for the management of the
store would be to reduce its level by the maximum of $P_o$ at each
successive time~$t$, this being just sufficient to obtain the required
level $S^*_T$ at time~$T$---we could formally set
$\bar\mu_1=-\infty$).  Consider the behaviour of $S(\bar\mu_1)$, for
which there are three possibilities:
\begin{compactenum}[(a)]
\item the vector $S(\bar\mu_1)$ is feasible (i.e.\
  $\oT(\bar\mu_1)=\infty$); in this case we take $K=1$, the
  time~$T_1=T$, and $S^*_t=S_t(\bar\mu_1)$ with $\mu^*_t=\bar\mu_1$ for
  $1\le t\le T$;
\item the scalar $\bar\mu_1$ belongs to the set $M_1$; we here define
  $\oT_1=\oT(\bar\mu_1)$ and note that there necessarily exists at
  least one $t<\oT_1$ such that $S_t(\bar\mu_1)=E$ (for otherwise, by
  the continuity of each $S_t(\mu)$ in $\mu$, $\mu$ could be increased
  above $\bar\mu_1$ while still belonging to the set $M_1$); define
  $T_1$ to be any such $t$,
  and take $S^*_t=S_t(\bar\mu_1)$ and
  $\mu^*_t=\bar\mu_1$ for all $t$ such that $1\le t\le T_1$;
\item the scalar $\bar\mu_1$ belongs to the set $M_1'$; we here again
  define $\oT_1=\oT(\bar\mu_1)$ and note that, similarly to the case
  (b), there necessarily exists at least one $t<\oT(\bar\mu_1)$ such
  that $S_t(\bar\mu_1)=0$; define $T_1$ to be any such $t$,
  and again take $S^*_t=S_t(\bar\mu_1)$ and $\mu^*_t=\bar\mu_1$ for
  all $t$ such that $1\le t\le T_1$.
\end{compactenum}

The time $T_1$ and the constant $\bar\mu_1$ thus identified, the above
construction is now restarted at each of the successive times $T_k$,
$k=1,\dots,K-1$.  At each such time $T_k$ we replace $S^*_0$ by
$S^*_{T_k}$ and identify the corresponding sets $M_{k+1}$, $M'_{k+1}$,
the constant $\bar\mu_{k+1}$, and hence the times~$\oT_{k+1}$,
$T_{k+1}$.  We then set $\mu^*_t=\bar\mu_{k+1}$ and
$S^*_t=S^*_{t-1}+x^*_t(\bar\mu_{k+1})$ for $t=T_k+1,\dots,T_{k+1}$.
We continue thus until we obtain $k=K$ such that $T_K=T$.

In the more general case where the functions $C_t$ are not necessarily
strictly convex, we have the complication that, for appropriate $\mu$,
the quantity $x^*_t(\mu)$ may not be uniquely defined.  Rather each of
the ``functions'' $x^*_t$ can be viewed as a many-valued function
which is increasing in the sense that for $\mu_1<\mu_2$ we have
$x^*_t(\mu_1)\le x^*_t(\mu_2)$ for any values of $x^*_t(\mu_1)$ and
$x^*_t(\mu_2)$, and which is further continuous in the sense that (by
the supporting hyperplane theorem) every $x\in\X$ is a possible value
of $x^*_t(\mu)$ for some $\mu$.  In the first step of the above
construction (that required to identify the times $\oT_1$ and $T_1$
together with $(S^*_t,\mu^*_t)$ for $1\le t\le T_1$), these properties
of the many-valued functions $x^*_t$ extend in the obvious sense to
the paths $S(\cdot)$ given by \eqref{eq:16}, each of which now becomes
an envelope of paths.  Thus only obvious modifications are required in
order to proceed as before.  (The one formality is that the sets $M_1$
and $M'_1$ should be replaced by sets of paths, consisting of those
$S(\mu)$ which on first violating a capacity constraint do so
respectively below or above.)

We now have the following result.

\begin{theorem}
  \label{thm:const}
  Assume $\rho=1$.  Then the pair $(S^*,\mu^*)$ as given by the above
  recursive construction satisfies the conditions (i)--(iii) of
  Theorem~\ref{thm:ls}.  Further, the ``locality'' properties asserted
  at 1.\ and 2.\ above hold.
\end{theorem}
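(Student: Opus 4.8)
The plan is to verify conditions (i)--(iii) of Theorem~\ref{thm:ls} for the constructed pair, and then separately establish the two locality properties. Since the construction proceeds by concatenating blocks on successive intervals $(T_{k-1},T_k]$, and each block is a translated copy of the first-block construction (with $S^*_0$ replaced by $S^*_{T_{k-1}}$), it suffices to analyse the first block carefully and then argue that the concatenation works. Within the first block, condition~(ii) is immediate from the \emph{definition} of $x^*_t(\mu)$ as a minimiser of $C_t(x)-\mu x$ over $x\in\X$, since on $(T_0,T_1]$ we have set $\mu^*_t=\bar\mu_1$ and $x_t(S^*)=x^*_t(\bar\mu_1)$; at the junction times between blocks nothing special happens because each interval uses its own constant. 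Condition~(iii) on the interior of a block holds trivially since $\mu^*$ is constant there (so $\mu^*_{t+1}=\mu^*_t$ and $\rho=1$ gives equality); the only real content is at the \emph{boundary} times $t=T_k$, where $\mu^*$ may jump from $\bar\mu_k$ to $\bar\mu_{k+1}$. Here one must check that the direction of the jump is compatible with whether $S^*_{T_k}$ equals $0$ or $E$: in case~(b) we chose $T_1$ with $S^*_{T_1}=E$ and $\bar\mu_1=\sup M_1$, so the next block works with a value $\ge\bar\mu_1$ (because $\bar\mu_1\in M_1$ forces the subsequent trial values to increase), giving $\mu^*_{t+1}\ge\mu^*_t$ as required when $S^*_t=E$; case~(c) is symmetric with $S^*_{T_1}=0$ and a downward jump. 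This monotonicity-of-successive-constants claim is the technical heart of condition~(iii) and will need the fact that $\bar\mu_1=\sup M_1$ together with the monotonicity of the $x^*_t(\cdot)$ in $\mu$.

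For condition~(i), feasibility, one must check that the concatenated path $S^*$ satisfies the capacity constraints $0\le S^*_t\le E$ throughout and matches the terminal value $S^*_T$. Within the first block, up to time $T_1$, feasibility holds by the choice of $\bar\mu_1=\sup M_1$ and the definition of $\oT(\bar\mu_1)$: in case~(a) the whole path $S(\bar\mu_1)$ is feasible by definition; in cases~(b) and~(c), $S(\bar\mu_1)$ first violates a constraint only at $\oT_1>T_1$, so it is feasible on $[0,T_1]$, and $S^*_{T_1}\in\{0,E\}$ lies in $[0,E]$. The final block terminates at $T_K=T$, and one should argue the construction cannot stall---i.e.\ that $T_{k+1}>T_k$ strictly (the new constraint-touching time is strictly later than the restart point) and that it reaches $T$; this uses the assumed nonemptiness of the feasible set, which guarantees that at the last stage case~(a) eventually applies. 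The hitting of the terminal value $S^*_T$ at the end is handled by the way $\oT(\mu)$ was defined to include violation of $S_T^*$ at time $T$, so the final block's $\bar\mu_K$ is chosen precisely so that $S_T(\bar\mu_K)=S^*_T$.

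For the locality properties 1.\ and 2.: property~1 asks that the event $\{\oT_1=t\}$ depends only on $C_1,\dots,C_t$. This should follow because $\bar\mu_1=\sup M_1$ and the sets $M_1,M_1'$ are, for the purpose of deciding the \emph{first} violation time and whether it is an upper or lower violation, determined by the paths $S(\mu)$ only up to that first violation time---and $S_t(\mu)$ depends only on $C_1,\dots,C_t$. The subtlety is that $\sup M_1$ a priori looks like it could depend on the tail; but $M_1$ and $M_1'$ partition (a cofinite part of) the real line by the \emph{direction} of first violation, and that direction at the candidate time $t$ is visible from $C_1,\dots,C_t$ alone, so $\bar\mu_1$ and the classification (a)/(b)/(c) are too. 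Property~2 then follows: once $\oT_1=t$ is known at time $t$, the values $T_1\le\oT_1$ and $(S^*_u,\mu^*_u)$ for $u\le T_1$ are all read off from $S(\bar\mu_1)$ restricted to $[0,T_1]\subseteq[0,t]$. I expect the main obstacle to be making the $\sup M_1$ argument for property~1 fully rigorous---precisely, showing that the supremum of the ``first violation is downward'' set is insensitive to cost functions after the realised violation time $\oT_1$, which requires carefully tracking how the envelope of paths $S(\cdot)$ behaves near $\bar\mu_1$ and invoking the continuity and monotonicity of the (possibly many-valued) maps $x^*_t(\cdot)$ established just before the theorem statement. The non-strictly-convex case adds only the bookkeeping of replacing points by envelopes of paths, as the excerpt already indicates, so I would handle it by a remark rather than a separate argument.
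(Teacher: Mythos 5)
Your proposal follows essentially the same route as the paper's proof. Conditions (i) and (ii) hold by construction, and your treatment of condition (iii) at the block boundaries is exactly the paper's argument: with $\bar\mu_1\in M_1$ and $S^*_{T_1}=E$, the path $S(\bar\mu_1)$ restarted from level $E$ at time $T_1$ still first violates the lower capacity constraint (at $\oT_1$), so $\bar\mu_1$ belongs to the set $M_2$ of the restarted construction, whence $\bar\mu_2=\sup M_2\ge\bar\mu_1$, which is what \eqref{eq:5} requires at a time with $S^*_t=E$; case (c) is symmetric. Your handling of the non-strictly-convex case by passing to envelopes of paths also matches the paper.

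The one place you stop short is the locality argument, which you yourself flag as the main obstacle and leave unresolved. The paper closes it with a single observation you do not state: since each $S_t(\mu)$ is increasing in $\mu$, the first-violation time $\oT(\mu)$ is increasing in $\mu$ on $M_1$ and decreasing in $\mu$ on $M_1'$; consequently (taking $\bar\mu_1\in M_1$ without loss of generality) $\oT(\mu)\le\oT_1$ for every $\mu\in M_1$ and $\oT(\mu)\le T_1<\oT_1$ for every $\mu\in M_1'$. Thus the classification of \emph{every} trial value $\mu$ into $M_1$ or $M_1'$---and therefore $\bar\mu_1=\sup M_1$, which of the cases (a)--(c) obtains, the time $T_1$, and $(S^*_t,\mu^*_t)$ for $t\le T_1$---is decided by the cost functions up to time $\oT_1$. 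Your remark that ``the direction of first violation at the candidate time $t$ is visible from $C_1,\dots,C_t$ alone'' is true for each individual $\mu$, but it does not by itself exclude the possibility of some $\mu$ whose first violation occurs only \emph{after} $\oT_1$, which would make $\sup M_1$ depend on the tail; it is the monotonicity of $\oT(\cdot)$ that rules this out. With that observation supplied, your argument is complete and coincides with the paper's.
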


\begin{proof}
  Again suppose first that the functions $C_t$ are strictly convex.
  
  To show the first assertion of the theorem, note the conditions~(i)
  and (ii) of Theorem~\ref{thm:ls} are satisfied by construction and,
  for the condition~(iii) of Theorem~\ref{thm:ls}, it only remains to
  show that, in the case $K\ge2$, the condition~\eqref{eq:5} of (iii)
  is satisfied for $t=T_1,\dots,T_{K-1}$.  It is sufficient to
  consider $t=T_1$.  Since we are assuming $K\ge2$, the first of the
  three possible behaviours for the vector~$S(\bar\mu_1)$ considered
  at (a)--(c) above cannot occur.  Thus, without loss of generality,
  assume $\bar\mu_1\in M_1$.  Then $0\le S_t(\bar\mu_1)\le E$ for
  $1\le t\le \oT_1-1$, while $S_{\oT_1}(\bar\mu_1)$ violates the
  capacity constraints below (i.e.\ $S_{\oT_1}(\bar\mu_1)<0$ if
  $\oT_1<T$ and $S_{\oT_1}(\bar\mu_1)<S^*_T$ if $\oT_1=T$); further,
  as already noted in the above construction, at the time $T_1<\oT_1$
  we have $S_{T_1}(\bar\mu_1)=S^*_{T_1}$.  Thus, considering the
  construction restarted at the time~$T_1$, it now follows that also
  $\bar\mu_1\in M_2$.  Hence, from the definition of $\bar\mu_2$, it
  follows that $\bar\mu_2\ge\bar\mu_1$ as required.

  For the second part of the theorem, we again assume $K\ge2$
  (otherwise there is nothing to show).  Once more, it is sufficient
  to consider $k=1$.  Observe that, in the above construction,
  $\oT_1(\mu)$ is increasing in $\mu$ for $\mu\in M_1$ and decreasing in
  $\mu$ for $\mu\in M_1'$.  Suppose, without loss of generality,
  $\bar\mu_1\in M_1$.  Then, again from the above construction,
  $\oT_1(\mu)\le\oT_1$ for all $\mu\in M_1$ and $\oT_1(\mu)\le
  T_1<\oT_1$ for all $\mu\in M_1'$, so that the asserted result follows.
  
  In the case where the functions $C_t$ are not necessarily strictly
  convex, again only obvious and formal modifications are required: we
  proceed as indicated earlier, replacing the space of possible $\mu$
  with the space of possible paths $S(\mu)$ (where there may be
  infinitely many $S(\mu)$ corresponding to particular values of
  $\mu$). 
\end{proof}

\paragraph{Algorithm.}  Theorem~\ref{thm:const} gives an algorithm for
the construction of the pair $(S^*,\mu^*)$.  This algorithm is local
in time in the sense which is made precise in the statement of that
theorem, but which may be stated informally as being such that the
determination of the optimal control at any time depends only on a
knowledge of future cost functions to a time horizon which may be well
short of the final time $T$.  As previously remarked it is thus
typically suitable for the management of a store on an infinite time
horizon.  However, in the numerical implementation of the algorithm
there are some considerations which are worth commenting on at this
point.  We again focus on the first step of the algorithm in which,
given the initial level $S^*_0$ of the store, it is required to
determine the time $T_1$ and the value $\bar\mu_1$ (such that
$\mu^*_t=\bar\mu_1$ and $S^*_t=S(\bar\mu_1)$ for $1\le t\le T_1$).

In the case where the cost functions $C_t$ are strictly convex, the
determination of $\bar\mu_1$ usually---and inevitably in the case of
general convex cost functions---involves some form of numerical search
(e.g.\ a simple binary search) which terminates with a pair of values
$\bar\mu_1^l\in M_1$ and $\bar\mu_1^u\in M'_1$ such that
$\bar\mu_1^l<\bar\mu_1^u<\bar\mu_1^l+\epsilon$ to within some
sufficiently small tolerance $\epsilon>0$.  Suppose, without loss of
generality, that $\oT_1(\bar\mu_1^l)>\oT_1(\bar\mu_1^u)$.  It then
follows from the continuity in $\mu$ of the sample paths $S(\mu)$ that
at the time $t=\oT_1(\bar\mu_1^u)$ we have $S_t(\bar\mu_1^l)\approx
S_t(\bar\mu_1^u)\approx E$ (the errors in the approximations being
$o(\epsilon)$ as $\epsilon\to0$).  Thus, revisiting the detail of the
proof of Theorem~\ref{thm:const}, it is easy to see that we may make
the approximation $\bar\mu_1=\bar\mu_1^u$ (or $\bar\mu_1=\bar\mu_1^l$)
and $T_1=\oT_1(\bar\mu_1^u)$.  Similarly in the case where
$\oT_1(\bar\mu_1^l)<\oT_1(\bar\mu_1^u)$ we may take
$T_1=\oT_1(\bar\mu_1^l)$.  The error in the ultimately constructed
pair $(S^*,\mu^*)$ is then again $o(\epsilon)$ as $\epsilon\to0$.

In the case where the cost functions $C_t$ are not necessary strictly
convex, more care is as usual required, and a numerical search
terminates when we obtain a pair of paths of the form $S(\mu)$---one
first violating a constraint below and the other first violating a
constraint above---which are sufficiently close to each other.  It is
here possible that these paths may correspond to the same value of
$\mu$.  Thus those values of $\mu$ such that, for some $t$,
$x^*_t(\mu)$ is nonunique typically require to be identified in
advance.  Finally we remark that in the case where the cost functions
$C_t$ are simply piecewise linear (as in the ``small store'', or
price-taker, case in which the cost functions~$C_t$ are given by
\eqref{eq:1}), then the above algorithm may be adapted to avoid
numerical search.  Alternatively, standard linear programming
techniques may of course be used in this case, though it is not
obvious how these might be adapted to yield the ``time locality''
property which is identified above and which permits the optimal
control of the store on essentially infinite time horizons.

\paragraph{The case $\rho\le1$.}

We now consider briefly the case of general $\rho\le1$, i.e.\ where
we also model possible leakage from the store.  Only small and readily
understood modifications are required to the above algorithm.  Here,
as before, the essence of the argument is to attempt to choose
$(S^*,\mu^*)$ so as to satisfy the conditions of Theorem~\ref{thm:ls},
again by choosing the components of these vectors successively in
time, but now maintaining the relationship $\rho\mu^*_{t+1} =
\mu^*_t$, except at those times~$t$ such that the store is either
empty or full.  Thus we proceed as previously, except that the
relation~\eqref{eq:16} now becomes
\begin{displaymath}
    S_t(\mu) = \rho S_{t-1}(\mu)+x^*_t(\rho^{1-t}\mu), \qquad 1 \le t \le T,
\end{displaymath}
and corresponding and obvious small modifications are required in the
three cases (a)--(c) considered previously.

\paragraph{Further discussion.}

In the above construction, the typical length of the intervals between
the successive times $T_i$ depends on the shape of the cost functions
$C_t$ (notably the difference between buying and selling prices),
together with the rate at which these functions fluctuate in time.
This is to be expected as the store operates by selling at prices
above those at which it bought, and what is important is the frequency
with which such events can occur.  For example, such fluctuations may
occur an a 24-hour cycle, and, depending on the shape of the cost
functions, the typical length of the intervals between the successive
times $T_i$ may then be of the order of around 12 hours.  These points
are illustrated further in the examples of Section~\ref{sec:example}.

Finally we remark that, again in the above construction, it is not
difficult to see that, for each $k\le K-1$, suitable variation of the
cost function~$C_{\oT_k}$ changes $(S^*_t,\mu^*_t)$ for $T_{k-1}+1\le t\le
T_k$, and further that $\oT_1\le\dots\le\oT_K$.  Thus the latter
sequence provides, in the obvious sense, a running minimal time
horizon for the algorithmic solution of the problem~$\mathbf{P}$, and
in this sense the above algorithm is optimal.



\section{Sensitivity of store value with respect to constraint variation}
\label{sec:sens-store-value}

Under suitable differentiability assumptions, the Lagrangian theory of
the preceding sections enables an immediate determination of the effect
on the cost of operating the store (the negative of its value) of
marginal variations in either the capacity or the rate constraints.
The capacity variation result is almost immediate, while the rate
constraint result requires a modest extension of the earlier theory.
Throughout we again consider the more general
problem~$\mathbf{P}(a,\,b)$ introduced in Section~\ref{sec:solution},
together with its minimised objective function
$V(a,\,b)$---corresponding to the minimum cost of operating the store.
We again let $a^*$ and $b^*$ to be the values of $a$ and $b$
corresponding to our particular problem~$\mathbf{P}$ of interest---as
previously defined.  We assume throughout this section that the
minimised objective function $V(a,\,b)$ is differentiable with respect
to (each of the components of) the vectors~$a$ and $b$ at
$(a^*,\,b^*)$---as will be the case when, for example, the cost
functions~$C_t$ are differentiable at the solution to the
problem~$\mathbf{P}$.

Under this differentiability condition the vector~$\mu^*$ of
Theorem~\ref{thm:ls} is uniquely defined.  This follows from
consideration of the algorithm of Section~\ref{sec:algorithm}, which
sequentially constructs a pair $(S^*,\,\mu^*)$ satisfying the
conditions of Theorem~\ref{thm:ls}.  Here the differentiability
condition above implies easily that any attempt to vary $\mu^*$ as
constructed by that algorithm leads to a violation of the
complementary slackness conditions~(iii) of Theorem~\ref{thm:ls}.
(Alternatively, the uniqueness may here be argued directly from the
conditions~(ii) and (iii) of Theorem~\ref{thm:ls}, again by
considering infinitesimal variation of $\mu^*_t$ at those times~$t$
such that the capacity constraints are binding.)  This vector~$\mu^*$
is thus as identified by Theorem~\ref{thm:exist}---and has the
interpretation in terms of Lagrange multipliers given there---and is
as constructed by the algorithm of Section~\ref{sec:algorithm}.


It is convenient to write $V^*$ for the value $V(a^*,\,b^*)$ of the
minimised objective function for our particular problem of interest
$\mathbf{P}=\mathbf{P}(a^*,\,b^*)$.  For the sensitivity of the cost
of operating the store with respect to variation in the capacity
constraint, we have the following result.





\begin{theorem}
  \label{thm:capacity-variation}
  The derivative of the cost of operating the store with respect to
  variation of the capacity $E$ is given by
  \begin{equation}
    \label{eq:17}
    \frac{\partial V^*}{\partial E}
    = \sum_{t\in\tau} (\mu^*_t-\rho\mu^*_{t+1}),
  \end{equation}
  where $\tau$ is the set of times~$t$ such that $1\le t\le T-1$ and
  $S^*_t=E$, and where $\mu^*$ is as identified above.
\end{theorem}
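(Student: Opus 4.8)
The plan is to use the characterisation of $V(a,b)$ as a convex function together with the Lagrange-multiplier representation already established in Theorem~\ref{thm:exist}. Recall that in the proof of Theorem~\ref{thm:exist} we introduced multipliers $\beta^*=(\beta^*_1,\dots,\beta^*_T)$ for the upper-bound constraints $S_t\le b_t$, satisfying the supporting-hyperplane inequality~\eqref{eq:9}, and that $\mu^*_t=\sum_{u=t}^T\rho^{u-t}(\alpha^*_u+\beta^*_u)$. Under the differentiability hypothesis on $V$ at $(a^*,b^*)$ the supporting hyperplane is unique, so $\beta^*_t=\partial V^*/\partial b_t$ for each $t$; equivalently $\beta^*_t=\partial V^*/\partial b^*_t$. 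Since for $1\le t\le T-1$ we have $b^*_t=E$ (and $b^*_T=S^*_T$ is held fixed, so does not participate), the chain rule gives $\partial V^*/\partial E=\sum_{t=1}^{T-1}\beta^*_t$.

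The second step is to rewrite $\sum_{t=1}^{T-1}\beta^*_t$ in terms of $\mu^*$. From the complementary slackness conditions~\eqref{eq:11}–\eqref{eq:12}: $\alpha^*_t=0$ unless $S^*_t=0$, and $\beta^*_t=0$ unless $S^*_t=E$. In particular, for $t\notin\tau$ (i.e.\ $S^*_t\ne E$) we have $\beta^*_t=0$, so $\sum_{t=1}^{T-1}\beta^*_t=\sum_{t\in\tau}\beta^*_t$. It therefore remains to identify $\beta^*_t$ with $\mu^*_t-\rho\mu^*_{t+1}$ for $t\in\tau$. From the definition~\eqref{eq:15} one has, for every $t$ with $1\le t\le T-1$, the telescoping relation $\mu^*_t-\rho\mu^*_{t+1}=\alpha^*_t+\beta^*_t$. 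When $t\in\tau$ we have $S^*_t=E>0$, so $\alpha^*_t=0$, giving $\mu^*_t-\rho\mu^*_{t+1}=\beta^*_t$ exactly. Combining, $\partial V^*/\partial E=\sum_{t\in\tau}(\mu^*_t-\rho\mu^*_{t+1})$, which is~\eqref{eq:17}.

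I would then remark that the differentiability hypothesis is used twice: once to ensure the supporting hyperplane of~\eqref{eq:9} is unique so that its coefficients are genuine partial derivatives of $V^*$, and once (as already discussed in the text preceding the theorem) to ensure that $\mu^*$ is itself uniquely defined, so that the right-hand side of~\eqref{eq:17} is unambiguous. One should also note that varying $E$ means varying $b^*_t$ simultaneously for all $t$ in $1\le t\le T-1$ in the common direction, which is exactly the directional derivative computed above; no separate argument about feasibility is needed since $V(a,b)$ is defined and finite for all $(a,b)$ with $a\le b$ in a neighbourhood of $(a^*,b^*)$, the feasible set being nonempty and the perturbation only enlarging or mildly shrinking it.

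The main obstacle is essentially bookkeeping rather than conceptual: one must be careful that the endpoint constraint $S_T=S^*_T$ (encoded as $a^*_T=b^*_T=S^*_T$) is held fixed and so contributes nothing to $\partial V^*/\partial E$, and that the telescoping identity $\mu^*_t-\rho\mu^*_{t+1}=\alpha^*_t+\beta^*_t$ is applied only for $t\le T-1$ (for $t=T$ there is no $\mu^*_{T+1}$, matching the fact that the capacity constraint at $T$ is an equality, not a band). Once these index ranges are tracked correctly, the identification of $\beta^*_t$ with $\partial V^*/\partial b^*_t$ and with $\mu^*_t-\rho\mu^*_{t+1}$ on $\tau$ closes the argument.
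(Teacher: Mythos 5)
Your proposal is correct and follows essentially the same route as the paper: identify $\partial V^*/\partial E$ with $\sum_{1\le t\le T-1}\beta^*_t$ via the standard differentiable-case interpretation of the Lagrange multipliers from Theorem~\ref{thm:exist}, reduce the sum to $\tau$ by complementary slackness, and convert to $\mu^*_t-\rho\mu^*_{t+1}$ using the telescoping consequence of~\eqref{eq:15}. The paper writes the restricted sum as $\sum_{t\in\tau}(\alpha^*_t+\beta^*_t)$ before applying~\eqref{eq:15}, whereas you first kill $\alpha^*_t$ on $\tau$; this is the same two observations in a different order.
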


\begin{proof}
  Let $\alpha^*$ and $\beta^*$ be the vector Lagrange multipliers
  introduced in the proof of Theorem~\ref{thm:exist}.  Recall also the
  definition of $b^*$ above.  From the standard interpretation of
  Lagrange multipliers in the presence of differentiability of an
  objective function,
  \begin{align}
    \frac{\partial V^*}{\partial E}
    & = \sum_{1\le t\le T-1} \beta^*_t \nonumber\\
    & = \sum_{t\in\tau} (\alpha^*_t + \beta^*_t),
    \label{eq:18}
  \end{align}
  where \eqref{eq:18} above follows from the conditions~\eqref{eq:11}
  and \eqref{eq:12} (which imply that for $1\le t\le T-1$, we have
  $\beta^*_t=0$ for $t\notin\tau$ and $\alpha^*_t=0$ for $t\in\tau$).
  The required result now follows on using \eqref{eq:15}.
\end{proof}


We now consider the sensitivity of the cost of operating the store
with respect to variation in the rate constraints.  We here have the
following result.

\begin{theorem}
  \label{thm:rate-variation}
  Assume additionally that the cost functions $C_t$ are differentiable
  at the points $P_i$ and $-P_o$ corresponding to the input and output
  rate constraints.  Then the derivatives of the cost $V(a^*,\,b^*)$
  of operating the store with respect to variation of the input and
  output rate constraints $P_i$ and $P_o$ are given respectively by
  \begin{align}
    \frac{\partial V^*}{\partial P_i}
    & = \sum_{t\in\tau_i} (C'_t(P_i) - \mu^*_t)\label{eq:19}\\
    \frac{\partial V^*}{\partial P_o}
    & = \sum_{t\in\tau_o} (\mu^*_t - C'_t(-P_o)), \label{eq:20}
  \end{align}
  where $\tau_i$ is the set of times~$1\le t\le T$ such that
  $x_t(S^*)=P_i$ and $\tau_o$ is the set of times~$1\le t\le T$ such
  that $x_t(S^*)=-P_o$ (i.e.\ $\tau_i$ and $\tau_o$ are respectively
  the sets of times such that the input and output rate constraints
  are binding at the solution $S^*$ to the problem~$P$), and where
  again $\mu^*$ is as identified above.
\end{theorem}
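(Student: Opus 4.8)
The plan is to extend the Lagrangian bookkeeping of Theorem~\ref{thm:exist} to include the rate constraints as explicit inequality constraints, and then read off the derivatives via the standard interpretation of Lagrange multipliers already used in the proof of Theorem~\ref{thm:capacity-variation}. First I would enlarge the problem~$\mathbf{P}(a,\,b)$ by also treating $P_i$ and $P_o$ as parameters, writing the rate constraints $x_t(S)\le P_i$ and $-x_t(S)\le P_o$ for $1\le t\le T$ with slack variables, so that the minimised objective becomes a function $V(a,\,b,\,P_i,\,P_o)$ which is jointly convex in all its arguments (the argument is identical to the convexity argument already given for $V(a,b)$, since the new constraints are again linear in $S$ and in the parameters). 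The supporting hyperplane theorem then yields, in addition to $\alpha^*,\beta^*$, nonnegative vector multipliers $\gamma^*=(\gamma^*_1,\dots,\gamma^*_T)$ and $\delta^*=(\delta^*_1,\dots,\delta^*_T)$ associated with the input and output rate constraints respectively, with complementary slackness: $\gamma^*_t=0$ unless $x_t(S^*)=P_i$, and $\delta^*_t=0$ unless $x_t(S^*)=-P_o$. By the standard sensitivity interpretation, $\partial V^*/\partial P_i=\sum_t\gamma^*_t=\sum_{t\in\tau_i}\gamma^*_t$ and $\partial V^*/\partial P_o=\sum_t\delta^*_t=\sum_{t\in\tau_o}\delta^*_t$.

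The second and key step is to identify $\gamma^*_t$ with $C'_t(P_i)-\mu^*_t$ for $t\in\tau_i$, and $\delta^*_t$ with $\mu^*_t-C'_t(-P_o)$ for $t\in\tau_o$. To do this I would reproduce the derivation of inequality~\eqref{eq:14} but now \emph{without} imposing $x_t\in\X$ as a hard constraint; instead the rate constraints appear through their multipliers. Collecting the $S_t$-terms as before gives the cumulative multiplier $\mu^*_t$ defined by \eqref{eq:15}, and one obtains that $x_t(S^*)$ minimises, over all real $x$, the function $C_t(x)-\mu^*_t x+\gamma^*_t(x-P_i)+\delta^*_t(-x-P_o)$, i.e.\ the unconstrained minimiser condition $0\in\partial C_t(x_t(S^*))-\mu^*_t+\gamma^*_t-\delta^*_t$. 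Now fix $t\in\tau_i$, so $x_t(S^*)=P_i$; by the assumed differentiability of $C_t$ at $P_i$ this reads $C'_t(P_i)-\mu^*_t+\gamma^*_t-\delta^*_t=0$, and since $x_t(S^*)=P_i>-P_o$ forces $\delta^*_t=0$ by complementary slackness, we get $\gamma^*_t=\mu^*_t-C'_t(P_i)$. Wait --- the sign: the minimiser of $C_t(x)-\mu^*_t x$ over $x\le P_i$ sits at the upper endpoint precisely when the unconstrained minimiser lies above $P_i$, i.e.\ when $C'_t(P_i)\le\mu^*_t$, so $\gamma^*_t=\mu^*_t-C'_t(P_i)\ge0$, and hence $\partial V^*/\partial P_i=\sum_{t\in\tau_i}(\mu^*_t-C'_t(P_i))$. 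This, however, contradicts the stated sign in~\eqref{eq:19}; the resolution is that increasing $P_i$ \emph{relaxes} the constraint $x_t\le P_i$, so the correct convention makes the relevant derivative $-\gamma^*_t=C'_t(P_i)-\mu^*_t$, which is $\le0$ and matches~\eqref{eq:19} (relaxing a binding constraint cannot increase cost). The analogous computation at $t\in\tau_o$, using differentiability of $C_t$ at $-P_o$ and complementary slackness $\gamma^*_t=0$, gives $\partial V^*/\partial P_o=\sum_{t\in\tau_o}(\mu^*_t-C'_t(-P_o))\le 0$, which is~\eqref{eq:20}.

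The main obstacle I anticipate is precisely the careful handling of signs and of the subdifferential-versus-derivative distinction: the multipliers $\gamma^*,\delta^*$ enter the minimisation of condition~(ii)-type expressions at the \emph{boundary} of $\X$, where one must be sure that the rate constraint is the only active one (so that capacity multipliers do not contaminate the first-order condition at that $t$), and one must invoke the newly-assumed differentiability of $C_t$ at exactly the points $P_i$ and $-P_o$ to convert a subgradient inclusion into the clean expressions $C'_t(P_i)$ and $C'_t(-P_o)$. A secondary subtlety is ensuring that the value function $V$ remains differentiable in $P_i,P_o$ under the stated hypotheses --- this follows, as in Section~\ref{sec:sens-store-value}, from uniqueness of the full multiplier vector, which the algorithm of Section~\ref{sec:algorithm} guarantees once all the relevant cost functions are differentiable at the solution and at the rate-constraint endpoints. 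Everything else is the same linear-algebra rearrangement (telescoping the $\rho S_{t-1}$ terms) already carried out in the proofs of Theorems~\ref{thm:ls} and~\ref{thm:exist}.
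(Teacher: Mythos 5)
Your proposal is correct and follows essentially the same route as the paper: the paper likewise relaxes $x_t(S)\in\X$ to $x_t(S)\in\R$, introduces slack variables and multipliers $\gamma^*,\delta^*$ for the rate constraints, derives the complementary slackness conditions and the stationarity condition $C'_t(P_i)=\mu^*_t+\gamma^*_t$ at binding times (with $\delta^*_t=0$ there since $v^*_t=P_i+P_o>0$), and reads off the derivative as $\sum_{t\in\tau_i}\gamma^*_t$. The only difference is a sign convention (the paper takes $\gamma^*_t\le 0$ so that $\partial V^*/\partial P_i=\sum_t\gamma^*_t$ directly), which you correctly diagnose and resolve mid-argument.
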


\begin{proof}
  We proceed as in the proof of Theorem~\ref{thm:exist}.  However, we
  rewrite the problem~$\mathbf{P}(a,\,b)$ by relaxing the rate
  constraints $x_t(S)\in\X$ to $x_t(S)\in\R$ and introducing instead
  the additional functional constraints
  \begin{align}
    x_t(S) + u_t & = P_i, \qquad 1 \le t \le T, \label{eq:21}\\
    x_t(S) - v_t & = -P_o, \qquad 1 \le t \le T, \label{eq:22}
  \end{align}
  for slack (or surplus) variables $u=(u_1,\dots,u_T)$ and
  $v=(v_1,\dots,v_T)$ constrained to be positive.  We thus introduce
  additional vectors $\gamma^*=(\gamma^*_1,\dots,\gamma^*_T)$ and
  $\delta^*=(\delta^*_1,\dots,\delta^*_T)$ of Lagrange multipliers to
  deal respectively with the additional functional
  constraints~\eqref{eq:21} and \eqref{eq:22}.  Arguing as before we
  have the further complementary slackness conditions (in addition to
  \eqref{eq:11} and \eqref{eq:12})
    \begin{align}
    \gamma^*_t \le 0, \qquad &
    \text{$\gamma^*_t=0$  whenever $u^*_t>0$},
    \qquad 1\le t \le T,
    \label{eq:23}\\
    \delta^*_t \ge 0, \qquad &
    \text{$\delta^*_t=0$  whenever $v^*_t>0$},
    \qquad 1\le t \le T.
    \label{eq:24}
  \end{align}
  where $u^*$ and $v^*$ are the values of $u$ and $v$ at the solution
  $S^*$ to the original problem~$\mathbf{P}$.  Again arguing as in the
  proof of Theorem~\ref{thm:exist}, we now have that, for each $1\le
  t\le T$,
  \begin{equation}
    \text{$x_t(S^*)$ minimises
      $C_t(x)-(\mu^*_t+\gamma^*_t+\delta^*_t)x$ in $x\in\R$,}
      \label{eq:25}
  \end{equation}
  where the vector~$\mu^*=(\mu^*_1,\dots,\mu^*_T)$ remains as
  identified in Theorem~\ref{thm:exist}---since the interpretations as
  derivatives of the Lagrange multipliers $\alpha^*$ and $\beta^*$ of
  that theorem remain unchanged and $\mu^*$ remains as identified by
  \eqref{eq:15}.  (We observe in passing that the
  relation~\eqref{eq:25} stands formally in contrast to the result in
  the proof of Theorem~\ref{thm:exist} where, from \eqref{eq:14},
  $x_t(S^*)$ minimised $C_t(x)-\mu^*_tx$ in $x\in\X$).

  We now note that, once again from the differentiability assumptions
  of the present theorem, and standard Lagrangian theory,
  \begin{displaymath}
    \frac{\partial V^*}{\partial P_i} = \sum_{t\in\tau_i} \gamma^*_t.
  \end{displaymath}
  Further, for $t\in\tau_i$, we have $v^*_t=P_i+P_o>0$ and so
  $\delta^*_t=0$ (from \eqref{eq:24}) and also
  $C'_t(P_i)=\mu^*_t+\gamma^*_t$ (from \eqref{eq:25}).  The
  result~\eqref{eq:19} now follows.  The result~\eqref{eq:20} follows
  similarly.
 \end{proof}

 \begin{remark}
   Note that the results~\eqref{eq:19} and \eqref{eq:20} of
   Theorem~\ref{thm:rate-variation} are also intuitively clear from
   the interpretation of $\mu^*_t$ given in Section~\ref{sec:solution}
   as a notional unit reference value for additions to the store at
   each time~$t$.  Thus for \eqref{eq:19}, note that, for each
   $t\in\tau_i$, increasing the maximum input rate~$P_i$ by
   $\mathrm{d}P_i$ permits the addition of increased value
   $\mu^*_t\mathrm{d}P_i$---corresponding to the addition to the level
   of the store---at a cost of $C'_t(P_i)\mathrm{d}P_i$.
 \end{remark}

\section{Examples}
\label{sec:example}
In this section we illustrate some of our results with an example
storage facility which has market impact.
We use half-hourly time units and a cost series $(p_1,\ldots,p_T)$
corresponding to the real half-hourly spot market wholesale
electricity prices in Great Britain for the year 2011.  As might be
expected these prices show a strong daily cyclical behaviour. We
assume that the store is large enough to have market impact on prices,
but small enough in relation to the rest of the network that the price
at which the store buys or sells energy can be approximated by a
linear function of the amount of energy traded by the store.  The
resulting cost function is quadratic and of the form
\begin{equation}
  \label{eq:quadcost}
  C_t(x) =
  \begin{cases}
    (p_t+p_t' x)x & \quad\text{if $x\ge0$}\\
    (p_t+\eta p_t' x)\eta x  & \quad\text{if $x<0$}
  \end{cases}
\end{equation}
where $\eta$ is the time-independent, or round-trip, efficiency of the
store and $p_t'\geq 0$ is a measure of the market impact of the store
on the price at time~$t$.  The terms in brackets
in~(\ref{eq:quadcost}) are the prices which result from filling (or
emptying) the store by $x$ units of energy.  In the following
examples, we assume further that each $p_t'$ is proportional to the
wholesale price $p_t$ at that time, so that $p_t'=\lambda p_t$ for
some $\lambda\geq 0$.  This reflects the intuition that the market
becomes more price-responsive when prices are high.  The special case
$\lambda=0$ corresponds to the price-taking store with cost
function~(\ref{eq:1}).  We assume a common input and output rate
constraint $P_i=P_o=P$ and, as before, denote by $E$ the capacity of
the store.  Finally, while we allow a round-trip efficiency $\eta<1$,
we assume throughout that there is no leakage from the store over
time, i.e.\ that $\rho=1$.

\begin{figure}[!h]
  \centering
  \includegraphics[scale=0.9]{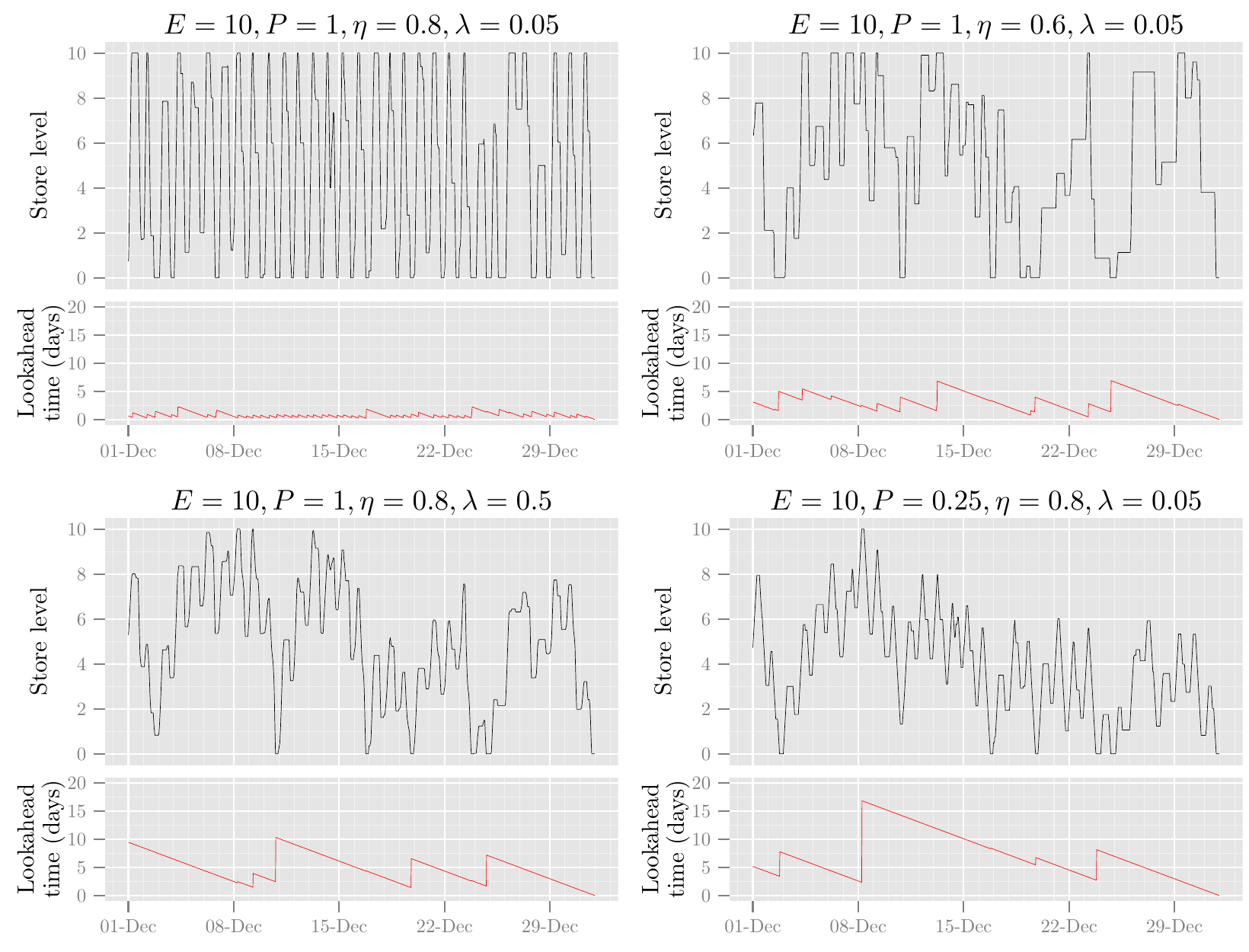}
  \caption{Examples in which the parameters associated with the store
    are varied.  In each case, the upper plot shows the optimal level
    of storage and the lower plot shows the look-ahead time required
    at each stage of the optimisation.}
  \label{fig:A}
\end{figure}

The optimal strategy associated with the cost
function~(\ref{eq:quadcost}) is shown in Figure~\ref{fig:A} (the upper
plot in each quadrant) for various choices of parameters.  The
optimisation takes place over the whole year and we present here the
behaviour of the store over a single month (December).  The plot in
the top-left quadrant corresponds to a ``base'' case, with the
parameter choices $E=10$, $P=1$, $\eta=0.8$ and $\lambda=0.05$.  The
time $E/P=10$ half-hours units for the store to completely fill or
empty and the round-trip efficiency of $0.8$ correspond approximately
to the Dinorwig pumped storage facility in Snowdonia in North Wales;
since, in the units of this example, the maximum volume which can be
bought or sold in a single period is $1$, the choice $\lambda=0.05$
indicates only modest market impact.  The upper portion of the plot
shows the variation of the store level with time~$t$, while the lower
portion shows, for each time~$t$, the time horizon $\oT_k-t$, where
$k$ is such that $T_{k-1}+1\le t\le T_k$, defined in
Section~\ref{sec:algorithm}; the latter is the length of time into the
future over which it is necessary to examine the cost functions in
order to make the optimal decision at time~$t$.  It is seen that,
under the optimal strategy, the store usually completely empties and
fills on a daily cycle, with some lull in activity over the Christmas
period.  As might be expected the time horizon necessary for an
optimal decision is of the order of a day or so.

The plots in the remaining three quadrants of Figure~\ref{fig:A} are
each formed by varying one of the parameters of the base case example,
in each case in such a way that the store is less active.  The plot in
the top-right quadrant corresponds to a reduction in the round-trip
efficiency of the store from $\eta=0.8$ to $\eta=0.6$.  Here it is
seen that the store level cycles less frequently and tends to remain
at the same value for longer periods of time than in the base
case---as might be expected; the time horizons necessary for optimal
decision making are significantly longer than in the base case.  The
plot in the lower-left quadrant corresponds to an increase in the
``market impact'' factor from $\lambda=0.05$ to $\lambda=0.5$, while
that in the lower-right quadrant corresponds to a tightening of the
rate constraint from $P=1$ to $P=0.25$.  In both cases the store is
almost continuously active but trades at lower volumes than in the
base case; consequently time horizons for optimal decision making are
very much longer than in the base case.  The broad similarity of the
behaviour in these two examples may be explained by noting that an
increased market impact factor acts to slow down the activity rate of
the store in much the same way as a tightening of the rate constraint.
This is because buying prices increase in proportion to the market
impact factor with each additional unit of energy bought at that time,
whilst selling prices similarly decrease with energy sold.  The store
therefore needs to balance the benefit of operating at high powers
with the impact this has on prices.


For some further numerical results in the context of this particular
example, see~\cite{CGZ}.

\section{Stochastic models}
\label{sec:stochastic-model}

In practice there is uncertainty as to future energy prices, and hence
there is a need to consider models in which the cost functions~$C_t$
evolve randomly in time.  However, the temporal behaviour of such
prices may be very heterogeneous and unlikely to evolve in any
stochastically regular manner; thus any comprehensive stochastic
modelling of possible future behaviour, together with its optimisation
(which under such general circumstances would typically and
necessarily involve some form of stochastic dynamic programming) is
likely in practice to prove at least computationally infeasible.  Thus
we should wish to make some form of approximation, sufficiently good
as to work well at any time in determining the decision over the next
time step; after each such step the future could then be reassessed
and the control re-optimised.  

There is substantial evidence in the literature that this approach,
sometimes referred to as the ``rolling intrinsic policy'', often
works very well in practice, providing near-optimal strategies at a
much lower computational cost than dynamic programming and other
competing methods (see, for example, \cite{LMS} for a comparison of
different approximate optimisation methods, both in terms of
computational efficiency and accuracy).  Examples of cost
distributions which have been handled using this approach in the
literature, and shown to produce near-optimal results, include (gas)
prices whose logarithms evolve as a single-factor, mean-reverting
stochastic process~\cite{Sec2010}, and prices which are characterised
by multivariate driftless Brownian motions~\cite{LMS,WWQ}.
In~\cite{SDJW}, a back-casting approach is employed, which can be
considered as a special case of the rolling intrinsic policy, in which
at each stage of re-optimisation, past prices (from the previous two
weeks) are used as future prices.  Even under this relatively simple
regime, it is illustrated that a store could gain between 80 and
90$\%$ of the profit available in a deterministic setting.


In the present section we propose a stochastic model, in which future
uncertainty has a martingale structure (which seems a plausible first
approximation to a stochastic structure for price uncertainty).  We
show that for this model the exact optimal policy is simply that for
the deterministic model in which future cost functions are replaced by
their expected values, and may thus be determined as in
Section~\ref{sec:algorithm}.  In a more general stochastic setting, we
propose the following relatively simple strategy: successively at each
time step, future cost functions are replaced by their expected values
and the present algorithm then used to work out how much to buy or
sell in the next time step; future expected cost functions are then
re-evaluated prior to the next step.  We expect this method to work
well, provided that the future expected cost functions, as seen at
each re-optimization time $t$, are sufficiently close to the actual
costs up until the first time horizon $\oT_k$ which follows $t$ (where
$\oT_k$ is as defined previously).
In particular, our analysis in Section~\ref{sec:algorithm} shows that,
if expected costs exactly match actual costs between times $t$ and
$\oT_k$, then any uncertainty in costs after $\oT_k$ are
irrelevant to the decision of the store at time $t$---thus, any
inaccuracies arising from this approach are due only to forecasting
inaccuracies between times $t$ and $\oT_k.$.  Given also the relative
computational efficiency of the current algorithm, in particular its
identification of the shortest time horizon required for the
determination of the optimal decision at each time step, we believe
that this method should provide a near-optimal procedure for the
efficient real-time management of storage over extended periods of
time.

Thus we consider a model in which uncertainties in future costs evolve
multiplicatively as we proceed backwards in time.  (This seems a
possible first approximation to market uncertainty.)
More precisely we assume that the cost functions $C_t$ are given by
\begin{displaymath}
  C_t = \xi_t \bar C_t, \qquad 1 \le t \le T,
\end{displaymath}
where $(\bar C_1,\dots,\bar C_T)$ is a sequence of deterministic cost
functions and where $(\xi_1,\dots,\xi_T)$ is a sequence of strictly
positive real-valued random variables forming a martingale, i.e.\ such
that
\begin{equation}
  \label{eq:26}
  \E(\xi_t\,|\,\mathcal{F}_{t-1}) = \xi_{t-1}, \qquad 1 \le t \le T;
\end{equation}
here $\E$ denotes expectation and each $\mathcal{F}_t$ is the
$\sigma$-algebra generated by $\xi_1,\dots,\xi_t$ (with
$\mathcal{F}_0$ the trivial $\sigma$-algebra).  Note that, since the
functions $\bar C_t$ may if necessary be rescaled, there is no loss of
generality in omitting a multiplicative constant from \eqref{eq:26}.
The deterministic functions $\bar C_t$ are assumed to satisfy the same
conditions as the cost functions $C_t$ of the deterministic problem
given in Section~\ref{sec:problem}, and hence the random cost
functions $C_t$ also satisfy these conditions.

The optimization problem~$\mathbf{P}$ of Section~\ref{sec:problem} now
becomes 
\begin{compactitem}
\item[$\mathbf{P}$:] choose the random vector $S=(S_1,\dots,S_T)$, with
  $S_t\in\mathcal F_t$ for each $t$, so as to minimise
  \begin{equation}\label{eq:27}
    G(S) := \E\left[\sum_{t=1}^T C_t(x_t(S))\right]
  \end{equation}
  with $S_0=S^*_0$ and $S_T=S^*_T$ (where $S^*_0$ and $S^*_T$ are
  fixed constants as previously), and again subject to the capacity
  constraints
  \begin{displaymath}
    0 \le S_t\le E,
    \qquad 1 \le t \le T-1.
  \end{displaymath}
  and the rate constraints
  \begin{displaymath}
    x_t(S) \in \X,
    \qquad 1 \le t \le T.
  \end{displaymath}
\end{compactitem}
Note in particular that each $S_t$ (or, equivalently, each $x_t(S)$)
may be chosen based on the knowledge of the realised random variables
$\xi_1,\dots,\xi_t$ up to time $t$.  We now have the following result
(which we reiterate one would expect to use in practice by coupling it
with re-optimisation at each time step).

\begin{theorem}
  \label{thm:stochastic}
  The solution to the above problem remains deterministic, with the
  optimal sequence of store levels as given in the case where
  stochastic cost functions $C_t$ are replaced by their deterministic
  counterparts $\bar C_t$.  Further the optimized value of the
  objective function~\eqref{eq:27} is the same as that for the
  deterministic variant of the problem.
\end{theorem}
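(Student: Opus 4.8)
The plan is to show that the deterministic optimal control, when applied in the stochastic setting, is still feasible and attains the same expected cost, and then to prove a matching lower bound via the Lagrangian conditions of Theorem~\ref{thm:ls} applied pathwise. Let $\bar S^*=(\bar S^*_0,\dots,\bar S^*_T)$ denote the optimal solution to the deterministic problem with cost functions $\bar C_t$, and let $\bar\mu^*=(\bar\mu^*_1,\dots,\bar\mu^*_T)$ be the associated reference vector guaranteed by Theorem~\ref{thm:exist}. Since $\bar S^*$ is a (deterministic) feasible vector, it is trivially adapted, so it is feasible for the stochastic problem~$\mathbf{P}$; its expected cost is $\E[\sum_t \xi_t\bar C_t(x_t(\bar S^*))] = \sum_t \E[\xi_t]\,\bar C_t(x_t(\bar S^*)) = \sum_t \bar C_t(x_t(\bar S^*))$, using $\E[\xi_t]=\xi_0=1$ (by the martingale property~\eqref{eq:26} and the normalisation). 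This gives the upper bound: the stochastic optimum is at most the deterministic optimum.

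For the lower bound, let $S=(S_0,\dots,S_T)$ be any feasible adapted control. I would mimic the proof of Theorem~\ref{thm:ls}, but working with the random cost functions and the \emph{randomly scaled} reference vector. Define $\nu_t := \xi_t \bar\mu^*_t$. Condition~(ii) of Theorem~\ref{thm:ls} for the deterministic problem says $x_t(\bar S^*)$ minimises $\bar C_t(x)-\bar\mu^*_t x$ over $x\in\X$; multiplying by $\xi_t>0$, $x_t(\bar S^*)$ also minimises $\xi_t\bar C_t(x)-\nu_t x = C_t(x)-\nu_t x$ over $x\in\X$ for every realisation. Hence, pathwise,
\begin{displaymath}
  \sum_{t=1}^T\bigl[C_t(x_t(\bar S^*))-\nu_t x_t(\bar S^*)\bigr]
  \le \sum_{t=1}^T\bigl[C_t(x_t(S))-\nu_t x_t(S)\bigr].
\end{displaymath}
Rearranging as in the proof of Theorem~\ref{thm:ls}, with $\rho=1$ for notational simplicity, and using that $S$ and $\bar S^*$ agree at $0$ and $T$, the difference $\sum_t C_t(x_t(\bar S^*)) - \sum_t C_t(x_t(S))$ is bounded above by $\sum_{t=1}^{T-1}(\bar S^*_t-S_t)(\nu_t-\nu_{t+1})$. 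Now take expectations. The key point is to handle $\E[(\bar S^*_t - S_t)(\nu_t-\nu_{t+1})]$. Since $\bar S^*_t$ is constant and $\bar\mu^*_t,\bar\mu^*_{t+1}$ are constants, and $S_t$ is $\mathcal F_t$-measurable, I would condition on $\mathcal F_t$: $\E[(\bar S^*_t-S_t)(\xi_t\bar\mu^*_t-\xi_{t+1}\bar\mu^*_{t+1})\mid\mathcal F_t] = (\bar S^*_t-S_t)(\xi_t\bar\mu^*_t-\bar\mu^*_{t+1}\E[\xi_{t+1}\mid\mathcal F_t]) = (\bar S^*_t-S_t)\xi_t(\bar\mu^*_t-\bar\mu^*_{t+1})$, using~\eqref{eq:26}. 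Then, exactly as in Theorem~\ref{thm:ls}, the complementary slackness conditions~(iii) for the deterministic problem give $(\bar S^*_t-S_t)(\bar\mu^*_t-\bar\mu^*_{t+1})\le0$ for every realisation (the sign of $\bar\mu^*_t-\bar\mu^*_{t+1}$ is pinned down by whether $\bar S^*_t$ is $0$, $E$, or interior, and in each case $\bar S^*_t - S_t$ has the compatible sign because $0\le S_t\le E$), and multiplying by $\xi_t>0$ preserves the inequality. Hence every term has nonpositive expectation, so $\E[\sum_t C_t(x_t(\bar S^*))] \le \E[\sum_t C_t(x_t(S))]$, which is the desired lower bound. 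Combining the two bounds, $\bar S^*$ is optimal for the stochastic problem and the optimal values coincide; the general $\rho\le1$ case follows by the same rescaling remarked on in Section~\ref{sec:problem}, or by carrying the factors $\rho$ through verbatim.

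The main obstacle is the conditioning step: one must be careful that the rearrangement producing the telescoped sum $\sum_{t=1}^{T-1}(\bar S^*_t-S_t)(\nu_t-\nu_{t+1})$ is valid pathwise (it is, since it is pure algebra in each realisation), and then that taking $\E[\,\cdot\mid\mathcal F_t]$ inside is legitimate and collapses the martingale increment correctly — this is where the martingale hypothesis~\eqref{eq:26} is used, and it is essential that the deterministic reference $\bar\mu^*$ is non-random so that $\bar\mu^*_{t+1}$ can be pulled out of the conditional expectation. A minor integrability caveat: one should note the $\xi_t$ and cost functions are such that all the expectations above are finite (the $\xi_t$ are integrable as a martingale, the $\bar C_t$ are continuous hence bounded on the compact set $\X$, and $S_t\in[0,E]$), which I would mention in passing rather than belabour.
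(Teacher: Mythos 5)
Your proof is correct, but it takes a genuinely different route from the paper. The paper argues via dynamic programming: it defines the deterministic and stochastic cost-to-go functions $\bar V_t(S_t)$ and $V_t(S_t)$ and proves by backwards induction, using the martingale property at each step, that $V_t(S_t)=\xi_t\bar V_t(S_t)$ for all $t$ and all admissible $S_t$, from which both assertions of the theorem follow at $t=0$. You instead give a primal--dual verification: you exhibit the deterministic optimum $\bar S^*$ as a feasible adapted control attaining the deterministic value (via $\E[\xi_t]=1$), and certify its optimality among all adapted controls by running the Theorem~\ref{thm:ls} argument pathwise with the randomly scaled multipliers $\nu_t=\xi_t\bar\mu^*_t$, using the tower property to collapse $\E[\xi_{t+1}\,|\,\F_t]=\xi_t$ before invoking complementary slackness. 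All the steps check out: the scaling by $\xi_t>0$ does preserve the minimiser in condition~(ii), the telescoping is pathwise algebra, the conditioning is legitimate since $S_t$ and $\xi_t$ are $\F_t$-measurable and bounded/integrable, and the sign analysis of $(\bar S^*_t-S_t)(\bar\mu^*_t-\bar\mu^*_{t+1})$ is exactly that of Theorem~\ref{thm:ls}. What the paper's route buys is the stronger, state-by-state identity $V_t(\cdot)=\xi_t\bar V_t(\cdot)$, which shows the deterministic policy is optimal from \emph{every} reachable state (time-consistency), and it handles general $\rho$ without comment; what your route buys is economy --- it reuses the Lagrangian machinery already built, avoids setting up the recursion, and makes transparent exactly where the martingale hypothesis enters (pulling $\bar\mu^*_{t+1}$ out of the conditional expectation, which requires the multipliers to be deterministic). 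Your explicit treatment of the normalisation $\xi_0=\E[\xi_1]=1$ needed for the equality of optimal values is, if anything, slightly more careful than the paper's.
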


\begin{remark}
  This result is intuitively clear, since the stochastic aspect of the
  problem can be characterised as consisting of, at each successive
  time, a random but uniform scaling of all future costs, and any
  such scaling cannot change the optimal strategy.  However, a formal
  proof is required.
\end{remark}

\begin{proof}[Proof of Theorem~\ref{thm:stochastic}]
  Consider first the case in which the stochastic cost functions $C_t$
  are replaced by their deterministic counterparts $\bar C_t$.  For
  each $0\le t\le T-1$, and each fixed $S_t$ such that $0\le S_t\le
  E$, with $S_0=S^*_0$, define
  \begin{displaymath}
    \bar V_t(S_t) = \min_{S_{t+1},\dots,S_{T-1}}\sum_{u=t+1}^T\bar C_u(x_u(S)),
  \end{displaymath}
  where $S=(S_t,\dots,S_T)$ and, for each $u>t$, we have $0\le S_u\le E$
  with $S_T=S^*_T$ and where $x_u(S)=S_u-\rho S_{u-1}$ satisfies the
  rate constraint~$x_u(S)\in\X$.  Define also
  $\bar V_T(S^*_T)=0$.  Thus $\bar V_t(S_t)$ represents optimised
  future costs at time $t$ given that the level of the store is then
  $S_t$.  Then, by the usual dynamic programming recursion, we have
  \begin{equation}
    \label{eq:28}
    \bar V_t(S_t)
    = \min_{x_{t+1}\in\X}\left[\bar C_{t+1}(x_{t+1})
      + \bar V_{t+1}(\rho S_t + x_{t+1})\right],
    \qquad 0 \le t \le T-1,
  \end{equation}
  where the above minimisation is taken over $x_{t+1}\in\X$ such that
  $0\le\rho S_t+x_{t+1}\le E$ for $0\le t\le T-2$ and $\rho
  S_{T-1}+x_T=E$.

  In the general stochastic case define similarly, for $0\le t\le
  T-1$, and each fixed $S_t$ such that $0\le S_t\le E$, again with
  $S_0=S^*_0$,
  \begin{equation}
    \label{eq:29}
    V_t(S_t)
    = \E\left[\min_{S_{t+1},\dots,S_{T-1}}\sum_{u=t+1}^T
      C_u(x_u(S))\,\Bigg\lvert\,\F_t\right],
  \end{equation}
  where the random vector $S=(S_t,\dots,S_T)$ and, for each $u>t$, we
  have $S_u\in\F_u$ and $0\le S_u\le E$ with $S_T=S^*_T$ and where
  $x_u(S)=S_u-\rho S_{u-1}\in\X$.  Define also $V_T(S^*_T)=0$.  Thus
  again $V_t(S_t)$ represents optimised future costs at time $t$ given
  that the level of the store is then $S_t$.

  We now assert that, for each $t$ and $S_t$ as above,
  \begin{equation}
    \label{eq:30}
    V_t(S_t)=\xi_t\bar V_t(S_t).
  \end{equation}
  The proof of this assertion is by backwards induction in time $t$.
  The result is trivially true for $t=T$.  Assume now that it is true
  for $t=u+1$, where $0\le u\le T-1$.  Then, analogously
  to~\eqref{eq:28},
  \begin{align}
    V_u(S_u)
    & = \E\left[\min_{x_{u+1}\in\F_{u+1}} [C_{u+1}(x_{u+1})
      + V_{u+1}(\rho S_u + x_{u+1})]\,\bigg\lvert\,\F_u\right]
    \nonumber\\
    & = \E\left[\min_{x_{u+1}\in\F_{u+1}} \xi_{u+1}[\bar C_{u+1}(x_{u+1})
      + \bar V_{u+1}(\rho S_u + x_{u+1})]\,\bigg\lvert\,\F_u\right]
    \nonumber\\
    & = \E\left[\xi_{u+1}\bar V_u(S_u)\,\bigg\lvert\,\F_u\right]
    \label{eq:31}\\
    & = \xi_u\bar V_u(S_u), \label{eq:32}
  \end{align}
  where the above minimisation is taken over $x_{u+1}\in\F_{u+1}$,
  $x_{u+1}\in\X$, and such that $0\le\rho S_u+x_{u+1}\le E$ with $\rho
  S_{T-1}+x_T=E$ in the case $u=T-1$, and where \eqref{eq:31} and
  \eqref{eq:32} follow from \eqref{eq:26} and \eqref{eq:30}
  respectively.  Hence the assertion~\eqref{eq:30} holds for all $t$
  and for all $S_t$.

  Note also that, from iteration of the argument leading to
  \eqref{eq:32}, for each $t$ and $S_t$, the optimising values of
  $S_{t+1},\dots,S_{T-1}$ are as in the deterministic case.  The
  theorem now follows from this observation and from~\eqref{eq:30} in
  the case $t=0$.
\end{proof}

\section{Commentary and conclusions}
\label{sec:conclusions}

In the preceding sections we have developed the optimization theory
associated with the use of storage for arbitrage, in particular the
strong Lagrangian theory which may be used to form the basis of
optimal control and which is necessary for the correct dimensioning of
storage facilities.  We have also given an algorithm for the
determination of the optimal control policy and of the associated
Lagrange multipliers.  In particular the algorithm captures the fact
that the control policy is essentially local in time, in that, for a
given system subject to given capacity and rate constraints, at each
time optimal decisions are dependent only on future cost functions
within an identifiable and typically short time horizon.

Our framework accounts for nonlinear cost functions, rate constraints,
storage inefficiencies, and the effect of externalities caused by the
activities of the store impacting the market.  It further accounts for
leakage over time from the store---something which may be expected to
substantially further localise over time the character of optimal
control policies.  While the model of the earlier sections of the
paper is deterministic in that it assumes that all the prices
determining the cost functions are known in advance, we have also
considered what we hope to be a realistic approach to near-optimal
control in a stochastic cost environment: the formulation of a
reasonably realistic approximate model for which the optimal control
may be precisely and efficiently evaluated via the earlier
deterministic algorithm, combined with the ability to re-optimise at
each time step by reformulating the approximation.  This general
approach has been shown to work well elsewhere.

What we have not done in the present paper is to consider the use of
storage for providing a reserve in case of unexpected system shocks,
such as sudden surges in demand or shortfalls in supply.  This problem
is considered by other authors (see, for example, \cite{BGK,GTL,GLTP})
in the case where the probabilities of storage underflows or overflows
are controlled to fixed levels.  However, we believe that a further
approach here would be to attach economic values to such underflows or
overflows, translating to attaching an economic worth to the absolute
level the store (as opposed to attaching a worth to a \emph{change} in
the level of the store as in the present paper).  Since in practice
storage is used both for arbitrage and for buffering or control as
described above, this would provide a more integrated approach to the
full economic valuation of such storage.

\section*{Acknowledgements}
\label{sec:acknowledgements}
The authors wish to thank their co-workers Andrei Bejan, Janusz
Bialek, Chris Dent and Frank Kelly for very helpful discussions during
the preliminary part of this work.  They are also most grateful to the
Isaac Newton Institute for Mathematical Sciences in Cambridge for
their funding and hosting of a number of most useful workshops to
discuss this and other mathematical problems arising in particular in
the consideration of the management of complex energy systems.  Thanks
also go to members of the IMAGES research group, in particular Michael
Waterson, Robert MacKay, Monica Giulietti and Jihong Wang, for their
support and useful
discussions.  The authors are further grateful to
National Grid plc for additional discussion and the provision of data,
and finally to the Engineering and Physical Sciences Research Council
for the support of the research programme under which the present
research is carried out.

\newpage  

\end{document}